\renewenvironment{proof}[1][Proof]{\noindent\textit{#1. } }{\hfill$\square$}
 \newtheoremstyle{theorem}{6pt}{6pt}{\rm}{}{\sffamily}{ }{ }{}
 \theoremstyle{theorem}
\newtheorem{theorem}{\sc Theorem}[section]
 \newtheoremstyle{algorithm}{6pt}{6pt}{\rm}{}{\sffamily}{ }{ }{}
 \theoremstyle{algorithm}
 \newtheoremstyle{lemma}{6pt}{6pt}{\rm}{}{\sffamily}{ }{ }{}
 \theoremstyle{lemma}
 \newtheorem{lemma}{\sc Lemma}[section]
\newtheoremstyle{case}{6pt}{6pt}{\rm}{}{\sffamily}{. }{ }{}
 \theoremstyle{case}
 \newtheoremstyle{statement}{6pt}{6pt}{\rm}{}{\sffamily}{ }{ }{}
\theoremstyle{statement}
 \newtheoremstyle{corollary}{6pt}{6pt}{\rm}{}{\sffamily}{ }{ }{}
 \theoremstyle{corollary}
  \newtheoremstyle{definition}{6pt}{6pt}{\rm}{}{\sffamily}{ }{ }{}
 \theoremstyle{definition}
 \newtheorem{definition}{\sc Definition}[section]
\newtheoremstyle{example}{6pt}{6pt}{\rm}{}{\sffamily}{ }{ }{}
\theoremstyle{example}
\newtheoremstyle{remark}{6pt}{6pt}{\rm}{}{\sffamily}{ }{ }{}
\theoremstyle{remark}
\newtheorem{remark}{\sc Remark}[section]
\newtheoremstyle{approximation}{6pt}{6pt}{\rm}{}{\sffamily}{ }{ }{}
\theoremstyle{approximation}
\newtheoremstyle{scheme}{6pt}{6pt}{\rm}{}{\sffamily}{ }{ }{}
\theoremstyle{scheme}
\newtheoremstyle{Algorithm}{6pt}{6pt}{\rm}{}{\sffamily}{ }{ }{}
\theoremstyle{Algorithm}
\newtheoremstyle{Assumption}{6pt}{6pt}{\rm}{}{\sffamily}{ }{ }{}
\theoremstyle{Assumption}
\newtheoremstyle{proposition}{6pt}{6pt}{\rm}{}{\sffamily}{ }{ }{}
\theoremstyle{proposition}
\newtheorem{proposition}{\sc Proposition}[section]
\newtheoremstyle{hypo}{6pt}{6pt}{\rm}{}{\sffamily}{ }{ }{}
 \theoremstyle{hypo}
  \newtheoremstyle{Step}{6pt}{6pt}{\rm}{}{}{ }{ }{}
 \theoremstyle{Step}
\numberwithin{equation}{section}
\newtheorem{assumption}{Assumption} 
\newcommand{\tm}{\times}%
\newcommand{\Uc}{\mathcal{U}}%
\newcommand{\R}{\mathbb{R}}%
\newcommand{\ccat}[3]{{#1\, \underset{#3}{\lozenge}\,{#2}}}%
\newcommand{\K}{\mathcal{K}}%
\newcommand{\Kinf}{\mathcal{K_\infty}}%
\newcommand{\KL}{\mathcal{KL}}%
\newcommand{\LL}{\mathcal{L}}%
\newcommand{\ep}{\varepsilon}%
\newcommand{\PC}{\mathrm{PC}}%
\newcommand{\UGS}{\mathrm{UGS}}%
\newcommand{\Z}{\mathbb{Z}}%
\newcommand{\id}{\mathrm{id}}%
\newcommand{\dist}{\mathrm{dist}}%
\newcommand{\T}{\ensuremath{\mathcal{T}}}  
\newcounter{syscounter}
\newenvironment{sysnum}{\begin{list}{($\Sigma{\arabic{syscounter}}$)}%
{\settowidth{\labelwidth}{($\Sigma4$)}
\settowidth{\leftmargin}{($\Sigma4$)~}%
\usecounter{syscounter}}}
{\end{list}}
\newcommand \qrq   {\quad\Rightarrow\quad}
\newcommand \sws   {\ \ \ \wedge\ \ \ }
\newcommand \srs   {\ \ \Rightarrow\ \ }
\newcommand \eps {\varepsilon}
\newcommand\q{\enquote}
\begin{document}

\title{Non-uniform ISS small-gain theorem for infinite networks}
\author{ {\sc Andrii Mironchenko}\\[2pt]
Faculty of Computer Science and Mathematics, University of Passau, Innstra\ss e 33, 94032 Passau, Germany\\[6pt]
{\rm [Received on XXX]}\vspace*{6pt}}
\pagestyle{headings}
\markboth{A. MIRONCHENKO}{\rm Non-uniform ISS small-gain theorem for infinite networks}
\maketitle


\begin{abstract}
{We introduce the concept of non-uniform input-to-state stability for networks.
It combines the uniform global stability with the uniform attractivity of any subnetwork while it allows for non-uniform convergence of all components.
For an infinite network consisting of input-to-state stable subsystems, that do not necessarily have a uniform $\KL$-bound on the transient behavior, we show: If the gain operator satisfies the uniform small-gain condition, then the whole network is non-uniformly input-to-state stable and all its finite subnetworks are input-to-state stable.}
{input-to-state stability, small-gain theorem, stability of networks, nonlinear systems, infinite-dimensional systems.}
\end{abstract}

\section{Introduction}

We are living in the world of networks, which grow steadily in size and in the number of couplings between individual agents.
Smart grids, connected vehicles, swarm robotics, and smart cities are particular examples of such networks, in which the participating agents may be plugged in and out from the network at any time.
Natural generalizations of such large-scale networks are infinite networks, which overapproximate and capture the essence of the original interconnections.
The complexity of such networks motivates to use the bottom-up approach and to establish stability for a complex system based on properties of its less complex components.

During the last 2 decades, vast literature appeared, devoted to spatially invariant systems and/or linear systems consisting of an infinite number of finite-dimensional components, interconnected with each other through the same pattern \cite{BPD02,BaV05,BeJ17,CIZ09,JoB05b}, etc.
Recently, several papers appeared, devoted to the development of methods and tools to analyze and control 
\emph{infinite networks composed of nonlinear infinite-dimensional systems of different nature, which are not necessarily spatially invariant}.
These works are based on the nonlinear small-gain methods and infinite-dimensional input-to-state stability (ISS) theory.

ISS theory has been initiated in \cite{Son89} and has quickly become one of the pillars of nonlinear control theory, including robust stabilization, nonlinear observer design, and analysis of large-scale networks, see \cite{ArK01,KKK95,Son08}. For the analysis of coupled systems, the ISS paradigm is especially fruitful in combination with the small-gain approach. 
In this method, the influence of any subsystem on other subsystems of a network is characterized by so-called gain functions. The gain operator constructed from these functions characterizes the interconnection structure of the network. 
The small-gain theorems state that if the gains are small enough (i.e., the gain operator satisfies some sort of small-gain condition), the network is stable.%

Small-gain theorems originated within the input-output theory of linear systems, for an overview see \cite{DeV09}.
The small-gain technique was extended to nonlinear feedback input-output systems in \cite{Hil91,MaH92}.
ISS paradigm allowed to extend the nonlinear small-gain theorems to the couplings of 2 nonlinear state-space systems in \cite{JTP94,JMW96}, and further to finite networks of input-to-state stable finite-dimensional systems \cite{DRW07,DRW10}.

Substantial progress in the infinite-dimensional ISS theory within the last years~\cite{JNP18,JSZ19,KaK16b,KaK19,MiW18b,TPT17,ZhZ18} (see~\cite{MiP20} for a recent survey) has created the basis which allows extending the small-gain results to finite and infinite networks of infinite-dimensional systems.
Small-gain results for finite networks of evolution equations in Banach spaces, both in trajectory and in Lyapunov formulations, have been developed in \cite{BLJ18,Mir19b,MiI15b} and \cite{KaK18,KaK19b,TWJ12} (see \cite{Mir19b} for more details and references).

\emph{Small-gain analysis of infinite (not necessarily spatially invariant) networks is especially challenging since the gain operator, collecting the information about the internal gains, acts on infinite-dimensional space, in contrast to finite networks of arbitrary nature}. This calls for a careful choice of the infinite-dimensional state space of the overall network and motivates the use of the theory of positive operators on ordered Banach spaces for the small-gain analysis.%

For networks consisting of \emph{exponentially} ISS systems, possessing exponential ISS Lyapunov functions with linear gains, it was shown in \cite{KMS19} that the whole network is exponentially ISS and there is a coercive exponential ISS Lyapunov function for the whole network provided that the spectral radius of the gain operator is less than one. 

Lyapunov-based ISS small-gain theorems have been reported in \cite{DMS19a,DaP20,KMZ21}.
In~\cite{DaP20}, ISS was shown for an infinite network of ISS systems provided that the internal gains capturing the influence of subsystems on each other are all uniformly less than identity, which is a rather conservative condition. In~\cite{DMS19a} the Lyapunov-based small-gain results for infinite networks have been shown, under the assumption of the existence of a linear path of strict decay for the gain operator. Finally, in \cite{KMZ21} the ISS small-gain was shown for infinite networks whose gains are nonlinear functions, which gave a full extension of the Lyapunov-based small-gain theorems for finite networks \cite{DRW10} to the setting of infinite networks. 

Lyapunov-based small-gain theorems require the knowledge of ISS Lyapunov functions for all subsystems. At the same time, even for finite-dimensional systems, it is natural in many applications just to assume that some subsystems are ISS, without requiring the availability of ISS-Lyapunov functions for them, see \cite{JTP94,DRW07}. 
Furthermore, for PDEs with boundary inputs, ISS Lyapunov converse theorems are not available, even for linear analytic systems, and major challenges appear on this way, which have led to the development of alternative non-Lyapunov methods for ISS analysis of infinite-dimensional systems, including admissibility theory \cite{JNP18,JSZ19}, monotonicity-based approach \cite{MKK19,ZhZ20b}, spectral decompositions \cite{KaK16b,KaK19,LhS19}, De Giorgi iteration \cite{ZhZ19b}.
This motivates the development of ISS small-gain theorems, which are not based on the knowledge of ISS Lyapunov functions for subsystems.

A trajectory-based small-gain theorem for infinite networks consisting of nonlinear infinite-dimensional systems has been shown in \cite{MKG20}. It states that \emph{if all the subsystems 
are ISS with a uniform $\KL$-transient bound} and with the gain operator satisfying the so-called monotone limit property, then the network is ISS. 
Furthermore, in \cite{MKG20} it was shown that a network consisting of subsystems that are merely uniformly globally stable (UGS), is again UGS provided that the gain operator satisfies a uniform small-gain condition, which is implied by the monotone limit (MLIM) property.

\subsection*{Contribution}

In this paper, we introduce
  the novel concept of \emph{non-uniform input-to-state stability} for networks, which is equivalent to ISS in the case of finite networks but is weaker than ISS  for infinite networks.
Using this concept we analyze the stability of infinite networks of ISS systems, that, in contrast to \cite{MKG20}, may not have uniform convergence rates of 
individual subsystems, and whose gain operators may not satisfy the MLIM property.

First, we show in Proposition~\ref{prop:Criterion-non-uniform-ISS} that the non-uniform ISS is equivalent to uniform global stability together with a uniform in an initial state but non-uniform in the components of the network version of the asymptotic gain property.

\emph{Our main result is the non-uniform ISS small-gain theorem (Theorem~\ref{thm:nonuniform-ISS_SGT-infinite-interconnections}), showing that an infinite interconnection of ISS systems, that possess a common $\Kinf$-bound (but not necessarily a common $\KL$-bound) on the transient behavior of subsystems, is non-uniformly ISS provided that the gain operator satisfies the uniform small-gain condition.}

We show that under the same requirement any finite subnetwork of the infinite network is ISS, which makes our result applicable to ISS analysis of finite networks of unknown size.

Since for finite networks, the uniform small-gain condition for the gain operator is equivalent to the strong small-gain condition (as shown in \cite{MKG20}), and since for finite networks, non-uniform ISS is equivalent to ISS, our \emph{non-uniform ISS small-gain theorem can be seen as an alternative to \cite{MKG20} way to extend the ISS small-gain theorem from finite to infinite networks}.

Last but not least, our results are valid for a very general class of control systems, including many classes of partial differential equations (PDEs) with distributed and boundary control, time-delay systems, discrete-time systems, etc.

\textbf{Notation.} 
We write $\R$ for the set of real numbers and $\Z$ for the set of integers. $\R_+$ and $\Z_+$ denote the sets of nonnegative reals and integers, respectively.%

We use the following classes of comparison functions:
{\allowdisplaybreaks
\begin{equation*}
\begin{array}{ll}
{\K} &:= \left\{\gamma:\R_+ \to \R_+ \ : \ \gamma\mbox{ is continuous and strictly increasing, }\gamma(0)=0\right\}\\
{\K_{\infty}}&:=\left\{\gamma\in\K \ :\ \gamma\mbox{ is unbounded}\right\}\\
{\LL}&:=\left\{\gamma:\R_+ \to \R_+ \ :\ \gamma\mbox{ is continuous and decreasing with}
 \lim\limits_{t\rightarrow\infty}\gamma(t)=0 \right\}\\
{\KL} &:= \left\{\beta: \R_+^2 \to \R_+ \ : \ \beta(\cdot,t)\in{\K},\ \forall t \geq 0,\  \beta(r,\cdot)\in {\LL},\ \forall r >0\right\}
\end{array}
\end{equation*}
}
For a normed linear space $(W,\|\cdot\|_W)$ and any $r>0$, we write $B_{r,W} :=\{w \in W: \|w\|_W < r\}$ for the open ball of radius $r$ around $0$ in $W$. 
By $\overline{B_{r, W}}$ we denote the corresponding closed ball. 
Throughout the paper, all considered vector spaces are vector spaces over $\R$. 
For a set $U$, we let $U^{\R_+}$ denote the space of all maps from $\R_+$ to $U$.

Given a nonempty index set $I$, we write $\ell_{\infty}(I)$ for the Banach space of all functions $x:I \rightarrow \R$ with $\|x\|_{\ell_{\infty}(I)} := \sup_{i\in I}|x(i)| < \infty$. Moreover, $\ell_{\infty}(I)^+ := \{ x \in \ell_{\infty}(I) : x(i) \geq 0 \mbox{ for all } i \in I \}$. 
If $I = \Z_+$, we simply write $\ell_{\infty}$ and $\ell_{\infty}^+$, respectively. 

By $\wedge$ we denote the logical \q{and}.

\section{Control systems and their properties}
\label{sec:Control systems and their properties}

In this section we introduce the concept of a control system as well as the main stability properties.
\begin{definition}
\label{def:Time-set} 
A \emph{time set} $\T$ is a subgroup of $(\R, +)$.
It induces the \emph{positive time set} $\T_+:=\{t \in \T: t \geq 0\}$.
\end{definition}

By convention, when the time set $\T$ is understood from the context, all
intervals are assumed to be restricted to $\T$. 
For instance,
$[a, b) = \{t \in \T , a \leq t < b\}$ and $t\geq 0$ is used synonymously with $t\in\T_+$.

We define the concept of a (time-invariant) system in the following way (cf. \cite{Wil72,KaJ11} for similar formalisms):
\begin{definition}
\label{Steurungssystem}
Consider the tuple $\Sigma=(\T_+,X,\Uc,\phi)$ consisting of 
\begin{enumerate}[label = (\roman*)]  
		\item A positive time set $\T_+$.
    \item A normed vector space $(X,\|\cdot\|_X)$, called the \emph{state space}, endowed with the norm $\|\cdot\|_X$.
    \item A normed vector \emph{space of inputs} $\Uc \subset \{u:\T_+ \to U\}$ endowed with a norm $\|\cdot\|_{\Uc}$.
		Here $U$ is a certain normed vector space, called the \emph{space of input values}, and 
		$\Uc$ is a linear subspace of $U^{\T_+}$.
		
		We assume that the following two axioms hold:
                    
\emph{The axiom of shift invariance}: for all $u \in \Uc$ and all $\tau\geq0$ the time
shift $u(\cdot + \tau)$ belongs to $\Uc$ with \mbox{$\|u\|_\Uc \geq \|u(\cdot + \tau)\|_\Uc$}.

\emph{The axiom of concatenation}: for all $u_1,u_2 \in \Uc$ and for all $t>0$ the concatenation of $u_1$ and $u_2$ at time $t$, defined by
\begin{equation}
\ccat{u_1}{u_2}{t}(\tau):=
\begin{cases}
u_1(\tau), & \text{ if } \tau \in [0,t], \\ 
u_2(\tau-t),  & \text{ otherwise},
\end{cases}
\label{eq:Composed_Input}
\end{equation}
belongs to $\Uc$.

    \item A map $\phi:D_{\phi} \to X$, $D_{\phi}\subseteq \T_+ \times X \times \Uc$ (called \emph{transition map}), so that for all $(x,u)\in X \tm \Uc$ it holds that $D_{\phi} \cap \big(\T_+ \times \{(x,u)\}\big) = [0,t_m)\tm \{(x,u)\}$, for a certain $t_m=t_m(x,u)\in (0,+\infty]$.
		
		The corresponding interval $[0,t_m)$ is called the \emph{maximal domain of definition} of $t\mapsto \phi(t,x,u)$.
		
\end{enumerate}
The quadruple $\Sigma$ is called a \emph{(control) system}, if the following properties hold:

\begin{sysnum}
    \item\label{axiom:Identity} \emph{The identity property:} for every $(x,u) \in X \times \Uc$
          it holds that $\phi(0, x,u)=x$.
    \item \emph{Causality:} for every $(t,x,u) \in D_\phi$, for every $\tilde{u} \in \Uc$, such that $u(s) =
          \tilde{u}(s)$ for all $s \in [0,t]$ it holds that $[0,t]\tm \{(x,\tilde{u})\} \subset D_\phi$ and $\phi(t,x,u) = \phi(t,x,\tilde{u})$.
					
    \item \label{axiom:Continuity} \emph{Continuity:} for each $(x,u) \in X \tm \Uc$ the map $t \mapsto \phi(t,x,u)$ is continuous on its maximal domain of definition.%
        \item \label{axiom:Cocycle} \emph{The cocycle property:} for all
                  $x \in X$, $u \in \Uc$, for all $t,h \geq 0$ so that $[0,t+h]\tm \{(x,u)\} \subset D_{\phi}$, we have
$\phi(h,\phi(t,x,u),u(t+\cdot))=\phi(t+h,x,u)$.
\end{sysnum}

\end{definition}

\begin{definition}\label{def_forward_completeness} 
We say that a control system $\Sigma = (\T_+,X,\Uc,\phi)$ is \emph{forward complete} if $D_\phi = \T_+ \tm X \tm \Uc$, i.e., $\phi(t,x,u)$ is well-defined for all $(x,u) \in X \tm \Uc$ and $t \geq 0$.
\end{definition}

If $\T=\R$, this class of systems encompasses control systems generated by ordinary differential equations (ODEs), switched systems, time-delay systems, many classes of partial differential equations, important classes of boundary control systems and many other systems. 

For $\T=\Z$, this class includes infinite-dimensional discrete-time systems of the form
\begin{eqnarray}
x(k+1) = A (x(k),u(k)), \quad k\in\Z_+,
\label{eq:Discrete-time-system_nonlinear-with-inputs}
\end{eqnarray}
where $A: X \times U \to X$ is a nonlinear operator and $\Uc$ is the space $\ell_\infty(\Z_+,U)$, defined as a set 
of all $u:\Z_+ \to U$, such that $\|u\|_{\infty} := \sup_{k\in\Z_+}\|u(k)\|_U < \infty$. 
For each initial condition $x \in X$ and each input $u\in\Uc$ the solution of the system \eqref{eq:Discrete-time-system_nonlinear-with-inputs} exists and is unique for all times $k\in\Z_+$. Denoting this solution at time $k$ by $\phi(k,x,u)$, we see that 
\eqref{eq:Discrete-time-system_nonlinear-with-inputs}
gives rise to a forward complete infinite-dimensional discrete-time control system $\Sigma=(\Z, X,\Uc,\phi)$.

For prolongation of solutions, the following property is important (which we adopt from \cite[Ch.~1]{KaJ11b}), which is implied by forward completeness.
\begin{definition}\label{def_BIC} 
We say that a system $\Sigma = (\T_+,X,\Uc,\phi)$ satisfies the \emph{boundedness-implies-continuation (BIC) property} if for each $(x,u)\in X \tm \Uc$ such that the maximal existence time $t_m = t_m(x,u)$ is finite, for any given $M>0$ there exists $t \in [0,t_m)$ with $\|\phi(t,x,u)\|_X > M$, i.e., $\limsup_{t\to t_m-0}\|\phi(t,x,u)\|_X  = \infty$.
\end{definition}

An important concept, introduced for ODE systems in \cite{Son89}, that unifies internal and external stability, is:
\begin{definition}\label{def_ISS}
A system $\Sigma = (\T_+,X,\Uc,\phi)$ is called \emph{(uniformly) input-to-state stable (ISS)} if there exist $\beta \in \KL$ and $\gamma \in \K$ such that for all $(t,x,u) \in D_{\phi}$ the following  holds: 
\begin {equation}\label{eq_iss}
  \|\phi(t,x,u)\|_X \leq \beta(\|x\|_X,t) + \gamma(\|u\|_{\Uc}).%
\end{equation}
\end{definition}

We need also the following property, implied by ISS, and playing an important role for characterization of ISS (cf. \cite{SoW96,MiW18b}):
\begin{definition}
A system $\Sigma = (\T_+,X,\Uc,\phi)$ is called \emph{uniformly globally stable (UGS)} if there exist $\sigma \in\Kinf$ and $\gamma \in \Kinf$ such that for all $(t,x,u) \in D_{\phi}$ it holds that
\begin{equation}\label{eq_UGS}
  \|\phi(t,x,u)\|_X \leq \sigma(\|x\|_X) + \gamma(\|u\|_{\Uc}).%
\end{equation}
\end{definition}

\section{Infinite interconnections}
\label{sec:Infinite interconnections}

Recall the concept of infinite interconnections, indexed by some non-empty set $I$, containing at least 2 elements, as developed in \cite{MKG20} and inspired by \cite[Definition 3.3]{KaJ07}.

For each $i \in I$, let $(X_i,\|\cdot\|_{X_i})$ be the state space of the $i$-th system $\Sigma_i$, which we assume to be a normed vector space.
Before we can specify the space of inputs for $\Sigma_i$, we construct the overall state space.

Let $Q$ be a nonempty subset of $I$. In what follows, we denote by $(x_j)_{j \in Q}$ the vector consisting of $x_j$ for $j\in Q$. 
Define
\begin{equation}
\label{eq:X_Q}
 \hspace{-2mm} X_Q := \big\{ (x_j)_{j \in Q} : x_j \in X_j,\ \forall j \in Q \mbox{ and } \sup_{j \in Q} \|x_j\|_{X_j} < \infty \big\},%
\end{equation}
which becomes a real Banach space with the norm%
\begin{equation*}
  \|x\|_{X_Q} := \sup_{j\in Q}\|x_j\|_{X_j}.%
\end{equation*}
The \emph{state space} for the network we define as $X:=X_I$. 
We identify $X_Q$ with the space 
\begin{eqnarray}
\{ (x_j)_{j \in I} \in X :  x_j=0,\ j \notin Q\},
\label{eq:Identification}
\end{eqnarray}
and thus $X_Q$ is a subspace in $X$ in the sense of this identification. 

Also we use the shorthand notation $X_{\neq i}:=X_{I\backslash\{i\}}$ to denote the \emph{space of internal input values} to the $i$-th subsystem of the network. 

Consider for each $i \in I$ a control system of the form%
\begin{equation}
\label{eq:Sigma-i}
  \Sigma_i = (\T_+, X_i,\PC_b(\T_+,X_{\neq i}) \tm \Uc,\bar{\phi}_i),%
\end{equation}
where $\PC_b(\T_+,X_{\neq i})$, which we call the space of \emph{internal inputs}, is the space of globally bounded piecewise continuous functions, with the norm $\|w\|_{\infty} = \sup_{s \geq 0}\|w(s)\|_X$.

The norm on $\PC_b(\T_+,X_{\neq i}) \tm \Uc$ we define by%
\begin{equation}\label{eq_product_input_norm}
  \|(w,u)\|_{\PC_b(\T_+,X_{\neq i}) \tm \Uc} := \max\left\{ \|w\|_{\infty}, \|u\|_{\Uc} \right\}.%
\end{equation}
Recall that we assume that $\Uc \subset U^{\T_+}$ for some normed space $U$, and $\Uc$ satisfies the axioms of shift invariance and concatenation. Then, by the definition of $\PC_b(\T_+,X_{\neq i})$ and the norm \eqref{eq_product_input_norm}, these axioms are also satisfied for the product space $\PC_b(\T_+,X_{\neq i}) \tm \Uc$.%

According to the definition of $\Sigma_i$, the internal inputs belong to $\PC_b(\T_+,X_{\neq i})$, and thus are globally bounded. 
However, we can extend the definition of $\Sigma_i$ to allow for unbounded inputs, using the \emph{causal extension}: for each $t\geq 0$, each $x_i\in X_i$, each piecewise continuous $\phi_{\neq i}:\T_+\to X_{\neq i}$, and each $u\in\Uc$ we define
\begin{eqnarray}
\bar{\phi}_i(t,x_i,(\phi_{\neq i},u)): = \bar{\phi}_i(t,x_i,(\phi_{\neq i}|_{[0,t]},u)).
\label{eq:Causal-extension}
\end{eqnarray}
Here $\phi_{\neq i}|_{[0,t]}$ is the restriction of 
$\phi_{\neq i}$ to the time-interval $[0,t]$, extended by $0$ to the $(t,+\infty)$.
Note that $\phi_{\neq i}|_{[0,t]}$ is a well-defined (due to the concatenation axiom) and bounded piecewise continuous function, i.e., an admissible input to the system $\Sigma_i$, and hence the term $\bar{\phi}_i(t,x_i,(\phi_{\neq i}|_{[0,t]},u))$ in 
\eqref{eq:Causal-extension} makes sense.

In what follows we understand the systems $\Sigma_i$ in the sense of this causal extension.

\begin{definition}
\label{def_interconnection}
Given the control systems $(\Sigma_i)_{i \in I}$ as above, we call a control system of the form $\Sigma = (\T_+, X,\Uc,\phi)$ the \emph{(feedback) interconnection} of systems $(\Sigma_i)_{i\in I}$ if the following holds:%
\begin{enumerate}
\item[(i)]
The components $\phi_i$ of the transition map $\phi:D_{\phi} \rightarrow X$ satisfy for all $(t,x,u) \in D_{\phi}$
\begin{equation}
\label{eq:Solution-map-for-composite-system}
  \hspace{-8mm}\phi_i(t,x,u) = \bar{\phi}_i(t,x_i,(\phi_{\neq i},u)).
\end{equation}
where $\phi_{\neq i}(\cdot) = (\phi_j(\cdot,x,u))_{j \in I \backslash \{i\}}$.

\item[(ii)] $\Sigma$ has the BIC property.%
\end{enumerate}
We call $\Sigma_i$ the \emph{$i$-th subsystem} of $\Sigma$.%
\end{definition}

To measure the influence of each subsystem at any other subsystem, we define input-to-state stability for subsystems in the following form:
\begin{definition}\label{def_subsys_iss_semimax}
Given the spaces $(X_j,\|\cdot\|_{X_j})$, $j\in I$, and the system $\Sigma_i$ for a fixed $i \in I$, we call $\Sigma_i$  \emph{input-to-state stable (in semimaximum formulation)} if $\Sigma_i$ is forward complete and there are $\gamma_{ij},\gamma_j \in \K \cup \{0\}$ for all $j \in I$ with $\gamma_{ii}=0$ and $\beta_i \in \KL$ such that for all initial states $x_i \in X_i$, all internal inputs $w_{\neq i} = (w_j)_{j\in I \backslash \{i\}} \in \PC_b(\T_+,X_{\neq i})$, all external inputs $u \in \Uc$ and $t \geq 0$:%
\begin{align}
  \|\bar{\phi}_i&(t,x_i,(w_{\neq i},u))\|_{X_i}
	\leq \beta_i(\|x_i\|_{X_i},t) + \sup_{j \in I}\gamma_{ij}(\|w_j\|_{[0,t]}) + \gamma_i(\|u\|_{\Uc}).%
\label{eq:ISS-subsystems}
\end{align}
\end{definition}

The functions $\gamma_{ij}$ are called \emph{internal gains} and $\gamma_i$ is called \emph{external gain}.

Recall that given a nonempty index set $I$, we write $\ell_{\infty}(I)$ for the Banach space of all functions $x:I \rightarrow \R$ with $\|x\|_{\ell_{\infty}(I)} := \sup_{i\in I}|x(i)| < \infty$. A positive cone in $\ell_{\infty}(I)$ we define as
\[
\ell_{\infty}(I)^+ := \{ x \in \ell_{\infty}(I) : x(i) \geq 0 \mbox{ for all } i \in I \}.
\] 

Assuming that all systems $\Sigma_i$, $i\in I$, are ISS, we can define a nonlinear monotone operator $\Gamma_{\otimes}:\ell_{\infty}(I)^+ \rightarrow \ell_{\infty}(I)^+$, called \emph{gain operator}, from the gains $\gamma_{ij}$ as follows:%
\begin{equation}
\label{eq:Gain-operator-semimax}
  \Gamma_{\otimes}(s) := \bigl(\sup_{j\in I}\gamma_{ij}(s_j)\bigr)_{i\in I},\quad s = (s_i)_{i\in I} \in \ell_{\infty}(I)^+.%
\end{equation}

$\Gamma_{\otimes}$ is well-defined provided that the following holds:
\begin{assumption}\label{ass_gammamax_welldef}
For every $r>0$, we have%
\begin{equation*}
  \sup_{(i,j) \in I^2}\gamma_{ij}(r) < \infty.
\end{equation*}
\end{assumption}

For small-gain analysis we need the following property of nonlinear gain operators, considered, e.g., in \cite{DRW07,MKG20}:
\begin{definition}
We say that $\id - \Gamma_{\otimes}$ has the \emph{monotone bounded invertibility (MBI) property} if there exists $\xi \in \Kinf$ such that for all $v,w \in \ell_{\infty}(I)^+$
\begin{equation*}
  (\id - \Gamma_{\otimes})(v) \leq w \quad \Rightarrow \quad \|v\|_{\ell_{\infty}(I)} \leq \xi(\|w\|_{\ell_{\infty}(I)}).%
\end{equation*}
\end{definition}

In \cite{MKG20} it was shown that $\id - \Gamma_{\otimes}$ satisfies the MBI property if and only if the \emph{uniform small-gain condition} holds: There is $\eta \in \Kinf$ such that
\begin{equation}
\label{eq:uSGC-dist-form}
  \hspace{-4mm}\dist(\Gamma_{\otimes}(x) - x,\ell_{\infty}(I)^+) \geq \eta(\|x\|_X), \quad x \in \ell_{\infty}(I)^+.%
\end{equation}
Here $\dist(\Gamma_{\otimes}(x) {-} x,\ell_{\infty}(I)^+) {=} \inf_{y \in \ell_{\infty}(I)^+}\|\Gamma_{\otimes}(x) {-} x{-}y\|_{\ell_{\infty}(I)}$.

If the network is finite (i.e., if $I$ is of finite cardinality), then the uniform small-gain condition for $\Gamma_\otimes$ holds if and only if $\Gamma_\otimes$ satisfies the strong small-gain condition, see \cite[Proposition 14]{MKG20}.

\section{Non-uniform ISS and its characterization}
\label{sec:Non-uniform ISS and its characterizations}

In this section $I$ is a given nonempty index set.
\begin{definition}
\label{def:Non-uniform-ISS} 
Let $\Sigma_i:=(\T_+, X_i,\PC_b(\T_+,X_{\neq i}) \tm \Uc,\bar{\phi}_i)$, $i\in I$ be control systems. Assume that the interconnection $\Sigma=(\T_+, X,\Uc,\phi)$ is well-defined and forward complete.

We call the network $\Sigma$ \emph{non-uniformly input-to-state stable (non-uniformly ISS)}, if there are $(\tilde{\beta}_i)_{i\in I } \subset \KL$, $\tilde{\sigma} \in\Kinf$ and $\gamma\in\Kinf$, such that 
\begin{eqnarray}
\tilde{\beta}_i(r,t)\leq \tilde{\sigma}(r), \quad r\in\R_+,\ t\geq 0,\ i\in I,
\label{eq:Bounds_on_beta_coupled-system}
\end{eqnarray}
and the components $\phi_i(\cdot,x,u)$ of the coupled system for each $i\in I$ satisfy for all $t\geq 0$, $x\in X$ and $u\in\Uc$ the following estimate:
\begin{eqnarray}
\|\phi_i(t,x,u)\|_{X_i} \leq \tilde{\beta}_i(\|x\|_X,t) + \gamma(\|u\|_{\Uc}).
\label{eq:non-uniform-ISS}
\end{eqnarray}
\end{definition}

In this section, we show that if all the systems of the network are ISS and the gain operator satisfies the MBI property, then the network is non-uniformly ISS.
The non-uniformity in Definition~\ref{def:Non-uniform-ISS} amounts to the fact that the convergence rates for components of the network are non-uniform. At the same time, the stability bound $\sigma$ and the asymptotic gain $\gamma$ are the same for all components of the network. 
Furthermore, as we show next, for finite networks the non-uniform ISS is equivalent to ISS, and thus our approach constitutes   an alternative way to generalize the ISS small-gain theorem from finite to infinite networks.%


The non-uniform ISS can be characterized as follows:
\begin{proposition}
\label{prop:Criterion-non-uniform-ISS} 
Let $\Sigma_i:=(\T_+, X_i,\PC_b(\T_+,X_{\neq i}) \tm \Uc,\bar{\phi}_i)$, $i\in I$ be control systems. Assume that the interconnection $\Sigma=(\T_+,X,\Uc,\phi)$ is well-defined.
Then $\Sigma$ is non-uniformly ISS if and only if 
$\Sigma$ is UGS and there is $\hat{\gamma}\in\Kinf$ such that for all $\varepsilon,r>0$ and for each $i\in I$ there is $\tau_i(\ep,r)>0$, such that
\begin{align}
\|x\|_X\leq r &\sws \|u\|_\Uc \leq r \sws t\geq\tau_i(\ep,r)
 \qrq \|\phi_i(t, x, u)\|_{X_i} \leq \ep + \hat{\gamma}(\|u\|_\Uc).
\label{eq:nonuniform-in-i-UAG-bounded-inputs}
\end{align}
\end{proposition}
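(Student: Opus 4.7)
The forward direction $(\Rightarrow)$ is immediate. Assuming $\Sigma$ is non-uniformly ISS with data $((\tilde{\beta}_i)_{i\in I}, \tilde{\sigma}, \gamma)$, taking the supremum over $i$ in \eqref{eq:non-uniform-ISS} and using \eqref{eq:Bounds_on_beta_coupled-system} gives the UGS estimate $\|\phi(t,x,u)\|_X \leq \tilde{\sigma}(\|x\|_X) + \gamma(\|u\|_\Uc)$. For the asymptotic-gain condition, set $\hat{\gamma} := \gamma$ and, given $\varepsilon, r > 0$ and $i \in I$, use $\tilde{\beta}_i(r,\cdot) \in \LL$ to select $\tau_i(\varepsilon, r)$ with $\tilde{\beta}_i(r, t) \leq \varepsilon$ for $t \geq \tau_i(\varepsilon, r)$; monotonicity of $\tilde{\beta}_i$ in the first argument then delivers \eqref{eq:nonuniform-in-i-UAG-bounded-inputs}.

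For the reverse direction $(\Leftarrow)$, first observe that UGS combined with the BIC property embedded in Definition~\ref{def_interconnection} yields forward completeness of $\Sigma$, since UGS prevents any finite-time blowup on the maximal interval of existence. Let $\sigma, \gamma \in \Kinf$ denote the UGS comparison functions and, for each $i \in I$, introduce
\begin{equation*}
g_i(r, t) := \sup\bigl\{ \max\bigl\{\|\phi_i(t,x,u)\|_{X_i} - \hat{\gamma}(\|u\|_\Uc),\, 0\bigr\} :\, \|x\|_X \leq r,\ \|u\|_\Uc \leq r \bigr\}.
\end{equation*}
The estimate $\|\phi_i\|_{X_i} \leq \|\phi\|_X$ combined with UGS yields the $i$-uniform bound $g_i(r, t) \leq \rho(r) := \sigma(r) + \gamma(r)$, while the asymptotic-gain hypothesis \eqref{eq:nonuniform-in-i-UAG-bounded-inputs} gives $g_i(r, t) \to 0$ as $t \to \infty$ for each fixed $r > 0$. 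Clearly $g_i(\cdot, t)$ is non-decreasing. Passing to the monotone envelope $\tilde{g}_i(r, t) := \sup_{s \leq r,\ \tau \geq t} g_i(s, \tau)$ produces a function that is non-decreasing in $r$, non-increasing in $t$, still bounded by $\rho(r)$, and still vanishing as $t \to \infty$ for every $r$.

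The main technical step, and what I expect to be the principal obstacle, is to majorize each $\tilde{g}_i$ by some $\tilde{\beta}_i \in \KL$ while maintaining the $i$-independent bound $\tilde{\beta}_i(r,t) \leq \tilde{\sigma}(r)$ for a single $\tilde{\sigma} \in \Kinf$ depending only on $\rho$. This is a standard Sontag-type $\KL$-majorization of a monotone envelope function: one interpolates $\tilde{g}_i$ between the decay-in-$t$ data and the static bound $\rho$ so as to simultaneously secure strict monotonicity and continuity in $r$, decay and continuity in $t$, and the uniform ceiling $\tilde{\sigma}$. The crucial point is that $\tilde{\sigma}$ inherits $i$-independence from the $i$-uniformity of the UGS bound $\rho$, which is precisely why global stability of the interconnection (not a merely pointwise boundedness assumption) is needed. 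Once the $\tilde{\beta}_i$ are available, the required inequality \eqref{eq:non-uniform-ISS} follows by taking $r := \max(\|x\|_X, \|u\|_\Uc)$ in the definition of $g_i$ and applying the elementary inequality $\tilde{\beta}_i(\max(a,b), t) \leq \tilde{\beta}_i(a, t) + \tilde{\beta}_i(b, 0) \leq \tilde{\beta}_i(a, t) + \tilde{\sigma}(b)$ with $a := \|x\|_X$, $b := \|u\|_\Uc$. This yields non-uniform ISS with the common gain $\gamma^{\star}(r) := \tilde{\sigma}(r) + \hat{\gamma}(r) \in \Kinf$, completing the proof.
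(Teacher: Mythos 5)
Your proof is correct and follows the same two-part strategy as the paper: the forward direction picks $\hat\gamma:=\gamma$ and uses the decay of each $\tilde\beta_i(r,\cdot)$ to produce $\tau_i$; the reverse direction invokes BIC $+$ UGS for forward completeness, builds from the hypotheses a function of $(r,t)$ that is bounded by a $\Kinf$ function of $r$ uniformly in $i$ and in $t$ and vanishes as $t\to\infty$ for each $r$, and then majorizes it by a $\KL$ function $\tilde\beta_i$ whose ceiling $\tilde\sigma$ is $i$-independent. The one place you diverge from the paper in substance is the treatment of large inputs. You carry the constraint $\|u\|_\Uc\leq r$ into the envelope $g_i(r,t)$ and at the end insert $r=\max(\|x\|_X,\|u\|_\Uc)$ together with the inequality $\tilde\beta_i(\max(a,b),t)\leq\tilde\beta_i(a,t)+\tilde\sigma(b)$, which pushes a $\tilde\sigma(\|u\|_\Uc)$ into the gain so that $\gamma^\star=\tilde\sigma+\hat\gamma$. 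The paper instead first upgrades the hypothesis to arbitrary $u$ (using UGS when $\|u\|_\Uc>r$, yielding $\gamma:=\max\{\sigma_{UGS}+\gamma_{UGS},\hat\gamma\}$) and then builds the envelope only over $\|x\|_X\leq r$, so no max-splitting is needed afterward; it also constructs the envelope through a discretization $\tau_{i,n}:=\tau_i(2^{-n}\sigma_{UGS}(r),r)$ and a piecewise-defined $w_i$ rather than your direct $\sup$-definition, but these are interchangeable. The remaining technical step, producing $\tilde\beta_i\in\KL$ dominating the monotone envelope while respecting an $i$-independent $\Kinf$ ceiling $\tilde\sigma$, is exactly what the paper delegates to the cited $\KL$-majorization result (Proposition~9 of the referenced work, with the same auxiliary function $\omega$ chosen for all $i$); you correctly flag it as the crux and as a routine Sontag-type construction, which is what it is, so no gap there.
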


\begin{proof}
\q{$\Rightarrow$}. Let $\Sigma$ be non-uniformly ISS. Then
\begin{align*}
\|\phi(t,&x,u)\|_{X} =\sup_{i\in I}\|\phi_i(t,x,u)\|_{X_i}
\leq \sup_{i\in I}\tilde{\beta}_i(\|x\|_X,t) + \gamma(\|u\|_{\Uc}) \le \hat{\sigma}(\|x\|_X) + \gamma(\|u\|_{\Uc}),
\end{align*}
which shows UGS of $\Sigma$.

To obtain \eqref{eq:nonuniform-in-i-UAG-bounded-inputs}, take $\hat{\gamma}:=\gamma$, and pick $\tau_i(\ep,r)$ for each $\varepsilon,r>0$ and each $i\in I$ such that $\beta_i(r,\tau_i(\ep,r))\leq \varepsilon$. As $\beta_i\in\KL$, such $\tau_i(\ep,r)$ always exists. 

\q{$\Leftarrow$}. As a well-posed interconnection, $\Sigma$ has BIC property. In combination with UGS, this implies forward-completeness. Furthermore, by UGS of $\Sigma$, there are $\sigma_{UGS},\gamma_{UGS}\in\Kinf$:
\begin{align}
&\hspace{-2mm} i\in I \sws x\in X  \sws u\in \Uc  \sws t\geq0\nonumber\\
& \hspace{-2mm}\srs \|\phi_i(t, x, u)\|_{X_i} \leq \sigma_{UGS}(\|x\|_X) + \gamma_{UGS}(\|u\|_\Uc).
\label{lem7:help}
\end{align}
Pick any $\varepsilon,r>0$. From the above estimate we have for all $i\in I$, $t\geq 0$ 
and any $x\in X$, $u\in \Uc$ with $\|x\|_X \leq r\leq \|u\|_\Uc$ that
\begin{eqnarray}
\|\phi_i(t, x, u)\|_{X_i} \leq \sigma_{UGS}(\|u\|_\Uc) + \gamma_{UGS}(\|u\|_\Uc).
\label{eq:UGS-for-large-u}
\end{eqnarray}
Defining $\gamma(r):=\max\{\sigma_{UGS}(r) + \gamma_{UGS}(r), \hat{\gamma}(r)\}$, for all $r\geq 0$, and combining \eqref{eq:UGS-for-large-u} with 
\eqref{eq:nonuniform-in-i-UAG-bounded-inputs}, we obtain the following convergence estimate for arbitrary inputs (where $\tau_i(\ep,r)$ is as in the assumptions of the lemma):
\begin{align}
i\in I \sws &\|x\|_X\leq r\sws u\in \Uc \sws t\geq\tau_i(\ep,r) 
\qrq \|\phi_i(t, x, u)\|_{X_i} \leq \ep + \gamma(\|u\|_\Uc).
\label{eq:nonuniform-in-i-UAG}
\end{align}

Fix arbitrary $r \in \R_+$ and define $\eps_n:= 2^{-n}  \sigma_{UGS}(r)$, for all $n \in \Z_+$. Due to \eqref{eq:nonuniform-in-i-UAG}, there exists a sequence of times
$\tau_{i,n}:=\tau_i(\eps_n,r)$, $i\in I$, $n\in\Z_+$, which we assume without loss of generality to be strictly increasing in $n$, such that for all $i\in I$, $x \in \overline{B_{r,X}}$, $u \in \Uc$ and $n\in\Z_+$
\[
t \geq \tau_{i,n} \qrq \|\phi_i(t,x,u)\|_{X_i} \leq \eps_n + \gamma(\|u\|_{\Uc}).
\]
From \eqref{lem7:help} we see that we may set $\tau_{i,0} := 0$ for all $i\in I$.
Define $w_i(r,\tau_{i,n}):=\eps_{n-1}$, for $i\in I$, $n \in \Z_+\setminus\{0\}$, and $w_i(r,0):=2\eps_0=2\sigma_{UGS}(r)$.

Now extend the definition of $w_i$ 
to a function
$w_i(r,\cdot) \in \LL$, for any $i\in I$. 
We obtain for $t \in (\tau_{i,n},\tau_{i,n+1})$, $n=0,1,\ldots$ and $x\in B_{r,X}$
that
\[
\|\phi_i(t,x,u)\|_{X_i} \leq \eps_n + \gamma(\|u\|_{\Uc})< w_i(r,t) + \gamma(\|u\|_{\Uc}).
\]
Doing this for all $r \in \R_+$ we obtain the definition of the functions $w_i$, $i\in I$.

Now for each $i\in I$ define $\hat \beta_i(r,t):=\sup_{0 \leq s \leq r}w_i(s,t) \geq
w_i(r,t)$ for $(r,t) \in \R_+ \times \R_+$. From this definition it follows that, 
for each $t\geq 0$, $\hat\beta_i(\cdot,t)$ is 
nondecreasing in the first argument and $\hat\beta_i(r,\cdot)$ is decreasing in the second argument for each $r>0$ as
every $w_i(r,\cdot) \in \LL$.
Moreover, for each fixed $t\geq0$, $\hat \beta_i(r,t) \leq \sup_{0 \leq s \leq r}w_i(s,0)=2\sigma_{UGS}(r)$, which implies that $\hat\beta$ is continuous in the first argument at $r=0$ for any fixed $t\geq0$ and also $\hat \beta_i(0,t)=0$ for any $t\geq 0$.

By \cite[Proposition 9]{MiW19b}, $\hat\beta_i$ can be upper bounded by certain $\tilde{\beta}_i\in \KL$, and 
\eqref{eq:non-uniform-ISS} is satisfied with such $\tilde{\beta}_i$.
Furthermore, there is $\tilde{\sigma}\in\Kinf$, such that \eqref{eq:Bounds_on_beta_coupled-system} holds (choose the same function $\omega$ for all $i\in I$ in the proof of \cite[Proposition 9]{MiW19b}). 
\end{proof}

For finite networks ISS coincides with non-uniform ISS.
\begin{proposition}
\label{prop:ISS-of-finite-networks} 
Let $\Sigma_i:=(\T_+, X_i,\PC_b(\T_+,X_{\neq i}) \tm \Uc,\bar{\phi}_i)$, $i\in I$ be control systems and 
let $I$ be a finite set. Assume that the interconnection $\Sigma=(\T_+, X,\Uc,\phi)$ is well-defined.
Then $\Sigma$ is ISS if and only if $\Sigma$ is non-uniformly ISS.
\end{proposition}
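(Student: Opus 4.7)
The plan is to establish both implications directly from the definitions, using the finiteness of $I$ only in the nontrivial direction to push a supremum inside a $\KL$ bound.

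For the \q{$\Rightarrow$} direction, suppose $\Sigma$ is ISS with bounds $\beta \in \KL$, $\gamma\in\K$. Since $\|\phi_i(t,x,u)\|_{X_i}\leq \|\phi(t,x,u)\|_X$ for every $i\in I$, the choice $\tilde{\beta}_i:=\beta$ for all $i\in I$ immediately yields \eqref{eq:non-uniform-ISS}. The bound \eqref{eq:Bounds_on_beta_coupled-system} is then obtained by setting $\tilde{\sigma}(r):=\beta(r,0)+r$, which lies in $\Kinf$ and dominates $\beta(r,t)=\tilde{\beta}_i(r,t)$ for all $t\geq 0$. Note this direction needs no finiteness of $I$.

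For the \q{$\Leftarrow$} direction, let $\Sigma$ be non-uniformly ISS with $(\tilde{\beta}_i)_{i\in I}$, $\tilde{\sigma}$, $\gamma$ as in Definition~\ref{def:Non-uniform-ISS}. Define
\begin{equation*}
\beta(r,t) \;:=\; \max_{i\in I}\tilde{\beta}_i(r,t), \qquad r,t\in\R_+.
\end{equation*}
Since $I$ is finite, this pointwise maximum is well-defined, and the key step is to verify $\beta\in\KL$. For each fixed $t\geq 0$, $\beta(\cdot,t)$ is the maximum of finitely many members of $\K$, hence continuous, strictly increasing, and zero at $r=0$, so $\beta(\cdot,t)\in\K$. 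For each fixed $r>0$, $\beta(r,\cdot)$ is the maximum of finitely many members of $\LL$: continuity and monotonicity are immediate; crucially, finiteness of $I$ allows one to interchange limit and maximum, yielding $\lim_{t\to\infty}\beta(r,t)=\max_{i\in I}\lim_{t\to\infty}\tilde{\beta}_i(r,t)=0$, so $\beta(r,\cdot)\in\LL$. Thus $\beta\in\KL$.

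Taking the supremum over $i\in I$ in \eqref{eq:non-uniform-ISS} then gives
\begin{equation*}
\|\phi(t,x,u)\|_X = \sup_{i\in I}\|\phi_i(t,x,u)\|_{X_i} \leq \beta(\|x\|_X,t) + \gamma(\|u\|_{\Uc}),
\end{equation*}
which is \eqref{eq_iss}, proving ISS of $\Sigma$. The only subtlety is the interchange of limit and supremum in verifying $\beta(r,\cdot)\in\LL$, which is precisely where finiteness of $I$ is indispensable: for infinite $I$ the pointwise limit may fail to vanish, which is exactly why the non-uniform notion is genuinely weaker than ISS in the infinite-network setting.
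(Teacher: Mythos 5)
Your proof takes the same approach as the paper: the converse direction is exactly the paper's argument (take $\beta(r,t):=\max_{i\in I}\tilde{\beta}_i(r,t)$, use finiteness of $I$ to conclude $\beta\in\KL$, and take the supremum over $i$), and the forward direction is the same trivial observation. You are actually more careful than the paper in verifying that the finite pointwise maximum stays in $\KL$, which the paper simply asserts.

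One small omission worth noting: Definition~\ref{def:Non-uniform-ISS} of non-uniform ISS explicitly requires that $\Sigma$ be forward complete, whereas Definition~\ref{def_ISS} of ISS only demands the estimate \eqref{eq_iss} on $D_\phi$, and the proposition's hypothesis only assumes well-posedness, not forward completeness. So in the \q{$\Rightarrow$} direction you also need to argue forward completeness before you can assert non-uniform ISS; the paper does this in one line by observing that $\Sigma$, being a well-posed interconnection, has the BIC property, and ISS together with BIC forces $D_\phi=\T_+\tm X\tm\Uc$. A second, even more minor point: Definition~\ref{def:Non-uniform-ISS} asks for $\gamma\in\Kinf$ while ISS only provides $\gamma\in\K$; one should replace $\gamma$ by, e.g., $r\mapsto \gamma(r)+r$ to enforce unboundedness, which clearly preserves the estimate.
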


\begin{proof}
As $\Sigma$ has BIC property as a well-posed interconnection, ISS implies forward completeness. 
Clearly, ISS implies non-uniform ISS. The converse follows by setting $\beta(r,t):=\max_{i\in I} \tilde{\beta}_i(r,t)$. As $I$ is a finite set, $\beta \in\KL$, and since $\|\phi(t,x,u)\|_{X}=\max_{i\in I}\|\phi_i(t,x,u)\|_{X_i}$ the estimate \eqref{eq:non-uniform-ISS} implies 
for all $x\in X$,  $u\in \Uc$ and  $t\geq0$
\begin{eqnarray*}
\|\phi(t,x,u)\|_{X} \leq \beta(\|x\|_X,t) + \gamma(\|u\|_{\Uc}),
\end{eqnarray*}
which shows ISS of $\Sigma$.
\end{proof}

We recall a technical lemma, shown in \cite{Mir19b}:
\begin{lemma}\label{lem:LimSupEstimate}
Let $g:\R_+\to\R^p_+$, $p\in\Z_+$ be a globally bounded function and let $f:\R_+\to\R_+$ be an unbounded monotonically increasing function. Then%
\begin{eqnarray}
\lim_{t\to\infty} \sup_{s\geq f(t)} g(s)  =  \lim_{t\to\infty} \sup_{s\geq t} g(s). 
\label{eq:LimSupEstimate}
\end{eqnarray}
\end{lemma}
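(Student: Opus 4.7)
The plan is to reduce both limits to a single one-variable limit via a monotone auxiliary function. Since $g$ is globally bounded and nonnegative (componentwise), the function
\[
h(T) := \sup_{s \geq T} g(s), \quad T \geq 0,
\]
is well-defined (the supremum is taken componentwise in $\R^p_+$). Moreover, as $T$ grows the supremum is taken over a smaller set, so $h$ is non-increasing in each component. Being bounded below by $0$ and non-increasing, $h$ admits a limit as $T \to \infty$; call it $L \in \R^p_+$. Observe that the right-hand side of \eqref{eq:LimSupEstimate} is nothing but $\lim_{t\to\infty}h(t) = L$.

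Next, I would rewrite the left-hand side as $\lim_{t\to\infty}h(f(t))$. Since $f$ is monotonically increasing and unbounded, $f(t)\to\infty$ as $t\to\infty$. The plan is then to argue directly from the definition of $L$: for any $\varepsilon>0$ there exists $T_0>0$ such that $\|h(T)-L\|\leq\varepsilon$ for all $T\geq T_0$, and by unboundedness of $f$ there exists $t_0$ with $f(t_0)\geq T_0$; for $t\geq t_0$ monotonicity of $f$ yields $f(t)\geq T_0$, hence $\|h(f(t))-L\|\leq\varepsilon$. Thus $\lim_{t\to\infty}h(f(t)) = L$ as well, which is exactly \eqref{eq:LimSupEstimate}.

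For the componentwise interpretation, this argument applies to each coordinate separately, or equivalently one can work with any norm on $\R^p$ since all norms are equivalent and the supremum of a bounded set in $\R^p_+$ coordinatewise agrees with the usual partial-order supremum. I do not anticipate any real obstacle here: the lemma is essentially a change-of-variable observation for monotone limits, and the only hypotheses genuinely used are (i) boundedness of $g$ (to guarantee finiteness of $h$ and existence of $L$), and (ii) unboundedness together with monotonicity of $f$ (to ensure $f(t)\to\infty$ so that $h(f(t))$ traverses a cofinal sequence of values of $h$). The monotonicity of $f$ is not even strictly needed for the equality of limits, but it is compatible with the simple $\varepsilon$-argument above.
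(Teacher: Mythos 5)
Your argument is correct and is essentially the standard one. The paper does not give a proof of this lemma at all --- it simply cites an earlier reference --- so there is no in-paper argument to compare against, but the route you take (introduce the monotone tail function $h(T)=\sup_{s\ge T}g(s)$, note it is componentwise non-increasing and bounded so it has a limit $L$, then observe that $f(t)\to\infty$ forces $h(f(t))\to L$ by the usual $\varepsilon$-$T_0$ argument) is the natural proof of this change-of-variable observation.

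One small caveat on your closing aside: the claim that monotonicity of $f$ ``is not even strictly needed'' is misleading as stated. What the $\varepsilon$-argument actually requires is $f(t)\to\infty$; unboundedness alone does \emph{not} give this (an unbounded, non-monotone $f$ can keep revisiting small values, and then $h(f(t))$ need not converge at all). So the hypotheses unbounded plus monotone are sufficient but not individually disposable --- together they are precisely what guarantees $f(t)\to\infty$. The main body of the proof is sound; I would simply drop or reword that last remark.
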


\section{Non-uniform ISS small-gain theorem}
\label{sec:Non-uniform ISS SGT}

The ISS small-gain theorem in \cite[Theorem 2]{MKG20} states that if all subsystems $(\Sigma_i)_{i\in I}$ are ISS with a uniform transient bound (i.e., there is $\beta\in\KL$: $\beta_i \leq \beta$ for all $i\in I$ pointwise) and with the gain operator satisfying the so-called monotone limit property (which implies MBI property), then the network is ISS.

If the subsystems of the network are ISS but do not have uniform $\KL$-bounds from above for the transient behavior, it is not possible to guarantee ISS of the network, as can be seen on a simple example $\dot{x}_i = -\frac{1}{i}x_i$, $i\in I :=\Z_+$, with the state space $X=\ell_\infty$. Each subsystem is exponentially stable, and thus, ISS. But the overall network is not ISS. In fact, for an initial condition $x_0=(1,1,\ldots)$ it holds for the corresponding solution that $\|\phi(t,x_0)\|_{\ell_\infty} = 1$ for all $t\geq 0$.

In contrast to that, next, we show that if all the subsystems are ISS, have a uniform $\Kinf$-bound (\emph{but not necessarily a uniform $\KL$-bound}) on the transient behavior, and the gain operator satisfies merely MBI property (= uniform small-gain condition), then the network is non-uniformly input-to-state stable.

\begin{theorem}[Non-uniform ISS small-gain theorem]
\label{thm:nonuniform-ISS_SGT-infinite-interconnections} 
Let $\Sigma_i:=(\T_+, X_i,\PC_b(\T_+,X_{\neq i}) \tm \Uc,\bar{\phi}_i)$, $i\in I$ be forward complete control systems, satisfying the ISS estimates as in 
Definition~\ref{def_subsys_iss_semimax}. Let also the interconnection $\Sigma=(\T_+, X,\Uc,\phi)$ be well-defined and the following conditions hold:%
\begin{enumerate}[label=(\roman*)]
	\item\label{itm:nonuniform-ISS-SGT-Ass1} There exist $\gamma \in\K$ and $\sigma \in\K$ such that%
\begin{eqnarray}
\beta_i(r,t)\leq \sigma(r),\  \gamma_i(r) \leq \gamma(r),\  r\in\R_+,\ t\geq 0,\ i\in I.
\label{eq:Bounds_on_beta_and_external_gamma-nonuniform}
\end{eqnarray}
	\item\label{itm:nonuniform-ISS-SGT-Ass2} Assumption \ref{ass_gammamax_welldef} is satisfied for the operator $\Gamma_{\otimes}$ defined via the gains $\gamma_{ij}$ from Definition~\ref{def_interconnection} and $\Gamma_{\otimes}$ has the monotone bounded invertibility property.
	\item\label{itm:nonuniform-ISS-SGT-Ass3} for each $i\in I$ only finitely many elements of $(\gamma_{ij})_{j\in I}$ are nonzero.
\end{enumerate} 
Then $\Sigma$ is non-uniformly ISS.
%
%
\end{theorem}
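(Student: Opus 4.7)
The plan is to verify the criterion given by Proposition~\ref{prop:Criterion-non-uniform-ISS}, i.e., to establish (a) uniform global stability (UGS) of $\Sigma$ and (b) the uniform-in-initial-data, pointwise-in-index asymptotic bound \eqref{eq:nonuniform-in-i-UAG-bounded-inputs}. Both ingredients will be produced by applying the monotone bounded invertibility property (MBI) of $\id-\Gamma_{\otimes}$ to $\ell_{\infty}(I)^+$-valued sequences built, respectively, from running suprema and tail suprema of the subsystem norms.

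For UGS, I would fix $(x,u)\in X\times\Uc$, set $v_i(t):=\sup_{s\in[0,t]}\|\phi_i(s,x,u)\|_{X_i}$ on the maximal existence interval, and use the ISS estimate \eqref{eq:ISS-subsystems} together with $\beta_i(\cdot,\cdot)\leq\sigma$ and $\gamma_i\leq\gamma$ to obtain $(\id-\Gamma_{\otimes})(V(t))\leq\bigl(\sigma(\|x\|_X)+\gamma(\|u\|_{\Uc})\bigr)\mathbf{1}$, where $V(t)=(v_i(t))_{i\in I}\in\ell_{\infty}(I)^+$ (finiteness follows from continuity of $s\mapsto\phi(s,x,u)$ in $X$ on compact subintervals). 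MBI then gives $\|V(t)\|_{\ell_{\infty}(I)}\leq\xi\bigl(\sigma(\|x\|_X)+\gamma(\|u\|_{\Uc})\bigr)$; together with the BIC property this excludes finite-time blow-up, so $\Sigma$ is forward complete and UGS with the uniform bound $C(r):=\xi(\sigma(r)+\gamma(r))$ on the $X\times\Uc$-ball of radius~$r$.

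For the asymptotic bound, fix $r>0$ and, for $\rho\in[0,r]$ and $j\in I$, introduce the tail supremum
\[
\bar\Psi_j(t,\rho):=\sup\bigl\{\|\phi_j(s,x,u)\|_{X_j}\,:\,s\geq t,\ \|x\|_X\leq r,\ \|u\|_{\Uc}\leq\rho\bigr\}\leq C(r).
\]
Using the cocycle property to shift the trajectory back by $s\in(0,t)$, applying the ISS estimate for $\Sigma_j$ at the shifted initial time, the shift-invariance $\|u(\tau-s+\cdot)\|_{\Uc}\leq\|u\|_{\Uc}$, and condition~\ref{itm:nonuniform-ISS-SGT-Ass3} (which allows me to replace $\sup_{k\in I}\gamma_{jk}(\cdot)$ by the finite maximum $\max_{k\in J_j}\gamma_{jk}(\cdot)$ with $J_j:=\{k\in I:\gamma_{jk}\neq 0\}$), I would derive
\[
\bar\Psi_j(t,\rho)\leq\beta_j(C(r),s)+\max_{k\in J_j}\gamma_{jk}\bigl(\bar\Psi_k(t-s,\rho)\bigr)+\gamma(\rho).
\]
Sending $t\to\infty$ with $s=t/2$ (legal because $\beta_j\in\KL$, each $\gamma_{jk}$ is continuous, and $|J_j|<\infty$ lets the limit commute with the finite $\max$) yields $(\id-\Gamma_{\otimes})(\bar\Psi_{\infty}(\rho))\leq\gamma(\rho)\mathbf{1}$ with $\bar\Psi_{\infty}(\rho):=(\lim_{t\to\infty}\bar\Psi_j(t,\rho))_{j\in I}\in\ell_{\infty}(I)^+$; MBI then gives $\bar\Psi_j(\infty,\rho)\leq\xi(\gamma(\rho))=:\hat\gamma(\rho)$ for every $j$, with $\hat\gamma\in\Kinf$.

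The delicate step, which I expect to be the main obstacle, is promoting this to the uniform estimate~\eqref{eq:nonuniform-in-i-UAG-bounded-inputs}, where the time $\tau_j(\varepsilon,r)$ must not depend on the particular $(x,u)$ while the right-hand side is $\varepsilon+\hat\gamma(\|u\|_{\Uc})$ (and not merely $\varepsilon+\hat\gamma(r)$, which is weaker). To bridge this gap I would use a partition trick: by uniform continuity of $\hat\gamma$ on the compact interval $[0,r]$, pick $0=\rho_0<\rho_1<\cdots<\rho_N=r$ with $\hat\gamma(\rho_n)-\hat\gamma(\rho_{n-1})\leq\varepsilon/2$; for each~$n$ the previous step produces a time $T_{j,n}$ beyond which $\bar\Psi_j(\cdot,\rho_n)\leq\hat\gamma(\rho_n)+\varepsilon/2$, and setting $\tau_j(\varepsilon,r):=\max_{1\leq n\leq N}T_{j,n}$, any $u$ with $\|u\|_{\Uc}\in[\rho_{n-1},\rho_n]$ gives $\hat\gamma(\rho_n)\leq\hat\gamma(\|u\|_{\Uc})+\varepsilon/2$, hence $\|\phi_j(t,x,u)\|_{X_j}\leq\bar\Psi_j(t,\rho_n)\leq\hat\gamma(\|u\|_{\Uc})+\varepsilon$ whenever $\|x\|_X\leq r$ and $t\geq\tau_j(\varepsilon,r)$. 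Combined with UGS and Proposition~\ref{prop:Criterion-non-uniform-ISS}, this proves non-uniform ISS of $\Sigma$. Assumption~\ref{itm:nonuniform-ISS-SGT-Ass3} is indispensable precisely for the commutation of $\lim_{t\to\infty}$ with $\sup_{k\in I}\gamma_{jk}(\cdot)$ inside $\Gamma_{\otimes}$; without it only a one-sided inequality is generically available, forcing stronger structural assumptions such as the monotone limit property of~\cite{MKG20}.
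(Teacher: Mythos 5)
Your argument is correct and reaches the conclusion by the same overall strategy as the paper — establish UGS, then derive the index-wise asymptotic bound of Proposition~\ref{prop:Criterion-non-uniform-ISS} by applying the cocycle property, passing to a tail limit (using assumption~\ref{itm:nonuniform-ISS-SGT-Ass3} to commute the limit with the finite gain-supremum), and invoking MBI — but the bookkeeping in the asymptotic phase is genuinely different and, arguably, cleaner. The paper works with a dyadic decomposition of the input magnitudes $\|u\|_{\Uc}\in[2^{-k}r,\,2^{1-k}r]$, producing one tail quantity $y_i(r,k)$ for each shell $k$, and must then handle separately the finitely many ``large-input'' shells $k\le k_0$ and the residual ``small-input'' regime $k>k_0$ via a second family of quantities $z_i(r,q)$; this yields a final bound of the slightly weaker form $2\varepsilon+\xi(\gamma(2\|u\|_{\Uc}))$. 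You instead parametrize the tail supremum $\bar\Psi_j(t,\rho)$ continuously by the input radius $\rho\in[0,r]$, obtain $\bar\Psi_j(\infty,\rho)\le\hat\gamma(\rho)$ for all $\rho$ in one pass, and then exploit uniform continuity of $\hat\gamma$ on the compact $[0,r]$ to extract a finite partition and hence a single uniform threshold $\tau_j(\varepsilon,r)$; this gives the tighter $\varepsilon+\hat\gamma(\|u\|_{\Uc})$ and avoids the case split. A further (minor) difference is that you re-derive UGS from scratch via running suprema and MBI rather than citing \cite[Theorem 1]{MKG20} as the paper does; both are valid, and your derivation is essentially that theorem's proof. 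One small point you should patch in a full write-up, which affects the paper's proof equally: assumption~\ref{itm:nonuniform-ISS-SGT-Ass1} only guarantees $\gamma\in\K$ (and $\sigma\in\K$), so $\hat\gamma=\xi\circ\gamma$ and the UGS comparison functions need not a priori be of class $\Kinf$; one should either majorize them by $\Kinf$ functions or note that Proposition~\ref{prop:Criterion-non-uniform-ISS} continues to hold with class-$\K$ gains. Finally, your use of the choice $s=t/2$ in the cocycle shift neatly sidesteps the paper's appeal to Lemma~\ref{lem:LimSupEstimate}, since monotonicity of $t\mapsto\bar\Psi_k(t,\rho)$ already makes the limit along $t/2$ coincide with the full limit.
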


\begin{proof}
We proceed in two steps.

\noindent\textbf{UGS.} 
As all $\Sigma_i$ are ISS with corresponding $\beta_i$ and gains $\gamma_{ij}$ and $\gamma_i$, $i,j\in I$, in view of 
assumption~\ref{itm:nonuniform-ISS-SGT-Ass1} we have that 
for all initial states $x_i \in X_i$, all internal inputs $w_{\neq i} = (w_j)_{j\in I \backslash \{i\}} \in \PC_b(\T_+,X_{\neq i})$, all external inputs $u \in \Uc$ and $t \geq 0$ we have
\begin{align*}
  \|\bar{\phi}_i&(t,x_i,(w_{\neq i},u))\|_{X_i}
	\leq \sigma(\|x_i\|_{X_i}) + \sup_{j \in I}\gamma_{ij}(\|w_j\|_{[0,t]}) + \gamma_i(\|u\|_{\Uc}).%
\end{align*}
As the gain operator satisfies the MBI property, we obtain forward completeness and UGS of the network $\Sigma$ from the UGS small-gain theorem \cite[Theorem 1]{MKG20} (in this reference only continuous-time systems are treated, but the argument does not change for for general systems considered in this paper).

\noindent\textbf{The estimate \eqref{eq:nonuniform-in-i-UAG-bounded-inputs}.} 
As $\Sigma$ is the interconnection of $(\Sigma_i)_{i\in I}$ and is forward complete, we have $\phi_i(t,x,u) = \bar{\phi}_i(t,x_i,(\phi_{\neq i},u))$ for all $(t,x,u) \in \T_+ \tm X \tm \Uc$ and $i \in I$, with the notation from Definition \ref{def_interconnection}.%

Pick any $r \geq 0$, any $u \in \overline{B_{r,\Uc}}$ and any $x \in \overline{B_{r,X}}$. As $\Sigma$ is UGS, there are $\sigma^{\UGS},\gamma^{\UGS} \in \Kinf$ 
such that%
\begin{equation}
\label{eq:mu-definition}
  \|\phi(t,x,u)\|_X \leq \sigma^{\UGS}(r) + \gamma^{\UGS}(r) =: \mu(r) \quad \forall t \geq 0.
\end{equation}

Given $r\geq 0$, $\ep>0$, and $i\in I$, by ISS of $\Sigma_i$  choose $\tau^*_i = \tau^*_i(\ep,r) \geq 0$ (depending on $i$)  
such that $\beta_{i}(\mu(r),\tau^*_i) \leq \ep$. 

Due to the cocycle property, for all $i \in I$ and $t,\tau \geq 0$ we have%
\begin{align*}
  \phi_i(t + \tau,x,u) &= \bar{\phi}_i(t+\tau,x_i,(\phi_{\neq i},u)) \\
	                     &= \bar{\phi}_i\Big(\tau,\bar{\phi}_i(t,x_i,(\phi_{\neq i},u)),\big(\phi_{\neq i}(\cdot+t),u(\cdot+t)\big)\Big).%
\end{align*}
We obtain for all $x \in \overline{B_{r,X}}$, $u \in \overline{B_{r,\Uc}}$, $\tau \geq \tau^*_i$, $t \geq 0$:
\begin{align}
\label{eq:nuISS_SGT_tmp_estimate_1}
	\|\phi_i(t+\tau,x,u)\|_{X_i} 
	&\leq \beta_i(\|\bar{\phi}_i(t,x_i,(\phi_{\neq i},u))\|_{X_i},\tau) 
																	+ \sup_{j \in I} \gamma_{ij}( \|\phi_j\|_{[t,t+\tau]} ) + \gamma_i(\|u(\cdot+t)\|_{\Uc}) \nonumber\\
																	&\leq \beta_{i}(\|\phi(t,x,u)\|_X,\tau^*_i) + \sup_{j \in I}\gamma_{ij}(\|\phi_j\|_{[t,\infty)}) + \gamma_i(\|u\|_{\Uc}) \nonumber\\
																	&
																	\leq \ep {+} \sup_{j \in I}\gamma_{ij}(\|\phi_j\|_{[t,\infty)}) {+} \gamma_i(\|u\|_{\Uc}).%
\end{align}

Pick any $k \in \Z_+$ and define 
\begin{eqnarray}
B(r,k) := \overline{B_{r,X}} \tm \{ u \in \Uc : \|u\|_{\Uc} \in [2^{-k}r,2^{-k+1}r]\}.
\label{eq:B(r,k)-definition}
\end{eqnarray}

Taking the supremum of \eqref{eq:nuISS_SGT_tmp_estimate_1} over $B(r,k)$, we obtain for all $i \in  I$, $\tau\geq \tau^*_i$:
\begin{align}
\sup_{(x,u) \in B(r,k)}\|\phi_i(t+\tau,x , u)\|_{X_i} 
&\leq \ep + \sup_{j\in I\backslash\{i\}}\gamma_{ij}\big(\sup_{(x,u) \in B(r,k)}\big\|\phi_{j,[t,+\infty)}\big\|_{\infty}\big) + \gamma_i(2^{1-k}r).
\label{eq:nuISS_SGT_tmp_estimate_2}
\end{align}
Thus for all $t \geq 0$ we have
{
\begin{align}
\sup_{s\geq t+\tau^*_i} \sup_{(x,u) \in B(r,k)} \|\phi_i(s, x , u)\|_{X_i} 
&\leq \ep + \sup_{j\in I\backslash\{i\}}\gamma_{ij}\big(\sup_{(x,u) \in B(r,k)} \sup_{s\geq t} \|\phi_j(s, x , u)\|_{X_j}\big) + \gamma_i(2^{1-k}r)\nonumber \\
&= \ep + \hspace{-2mm}\sup_{j\in I\backslash\{i\}}\hspace{-2mm}\gamma_{ij}\big(\sup_{s\geq t} \sup_{(x,u) \in B(r,k)}\hspace{-2mm} \|\phi_j(s, x , u)\|_{X_j}\big) {+} \gamma_i(2^{1-k}r).
\label{eq:nuISS_SGT_tmp_estimate_3}
\end{align}
}

Define%
\begin{align*}
  y_i(r,k) &:= \lim_{t\to +\infty}\sup_{s\geq t}\sup_{(x,u)\in B(r,k)}\|\phi_i(s,x,u)\|_{X_i}
	= \limsup_{t\to +\infty}\sup_{(x,u)\in B(r,k)}\|\phi_i(t,x,u)\|_{X_i}.%
\end{align*}
By Lemma~\ref{lem:LimSupEstimate} it holds that%
\begin{equation*}
  y_i(r,k) = \lim_{t\to +\infty}\sup_{s\geq t +\tau^*_i}\sup_{(x,u)\in B(r,k)}\|\phi_i(s,x,u)\|_{X_i}.%
\end{equation*}
As each $\Sigma_i$ has a finite number of neighbors by assumption~\ref{itm:nonuniform-ISS-SGT-Ass3}, we can take the limit $t\to\infty$ in \eqref{eq:nuISS_SGT_tmp_estimate_3} and obtain
\begin{eqnarray}
\label{eq:nuISS_SGT_tmp_estimate_4}
  y_i(r,k) \leq \ep + \sup_{j\in I}\gamma_{ij}\left(y_j(r,k)\right) + \gamma_i(2^{1-k}r).%
\end{eqnarray}
As \eqref{eq:nuISS_SGT_tmp_estimate_4} is valid for arbitrarily small $\ep>0$, we obtain by computing the limit $\ep\to +0$ that%
\begin{eqnarray}\label{eq:nuISS_SGT_tmp_estimate_5}
  y_i(r,k) \leq \sup_{j\in I\backslash\{i\}}\gamma_{ij}\left(y_j(r,k)\right) + \gamma_i(2^{1-k}r),\quad i\in I.%
\end{eqnarray}
Denote $\vec{\gamma}(u) := ( \gamma_i(\|u\|_{\Uc}) )_{i \in I}$ and $y(r,k):=(y_i(r,k))_{i\in I}$ and note that $y(r,k)  \in\ell_\infty^+(I)$, as the entries $y_i$ are uniformly bounded by $\mu(r)$, and by (i), we have $\vec{\gamma}(u) \in \ell_\infty^+(I)$.

Let us rewrite \eqref{eq:nuISS_SGT_tmp_estimate_5} in a vector form:%
\begin{eqnarray}\label{eq:nuISS_SGT_tmp_estimate_6}
  y(r,k) \leq \Gamma^{ISS}_\otimes\left(y(r,k)\right) + \vec{\gamma}(2^{1-k}r),%
\end{eqnarray}
which we reformulate as:%
\begin{eqnarray*}
  (\mathrm{id}-\Gamma^{ISS}_\otimes)(y(r,k)) \leq \vec{\gamma}(2^{1-k}r).
\end{eqnarray*}
From the assumption \ref{itm:nonuniform-ISS-SGT-Ass2} of the theorem, there is $\xi\in\Kinf$ so that%
\begin{eqnarray}
  \|y(r,k)\|_{\ell_\infty} \leq \xi(\| \vec{\gamma}(2^{1-k}r)\|_{\ell_\infty}) \leq \xi(\gamma(2^{1-k}r)).%
\label{eq:nuISS_SGT_tmp_estimate_7}
\end{eqnarray}

As \eqref{eq:nuISS_SGT_tmp_estimate_7} is the same as $y_i(r,k) \leq \xi(\gamma(2^{1-k}r))$ for all $i\in I$, \eqref{eq:nuISS_SGT_tmp_estimate_7} is equivalent to
existence for any $i\in I$, any $\ep>0$, any $r>0$ and any $k\in \Z_+$ of a time $\tilde{\tau}_i=\tilde{\tau}_i(\ep,r,k)$ such that

\begin{equation} 
\label{eq:nuISS_SGT_intermediate_step}
\begin{split}
\|x\|_X\leq r & \sws  \|u\|_\Uc \in [2^{-k}r,2^{1-k}r] \sws t\geq\tilde{\tau}_i(\ep,r,k)\\
& \qrq \|\phi_i(t, x, u)\|_{X_i}  \leq \ep + \xi(\gamma(2^{1-k}r)).
\end{split}
\end{equation}

Define $k_0=k_0(\ep,r) \in \Z_+$ as the minimal $k$ so that $\xi(\gamma(2^{1-k}r)) \leq \ep$ and let
\[
\hat{\tau}_i(\ep,r):=\max\{\tilde{\tau}_i(\ep,r,k):\ k= 1,\ldots,k_0(\ep,r)\}.
\]
Pick any nonzero $u\in\overline{B_{r,\Uc}}$.
Then there is $k\in \Z_+$ so that
$\|u\|_\Uc \in (2^{-k}r,2^{1-k}r]$.
If $k\leq k_0$ (i.e., if inputs are large enough), then for $t\geq\hat{\tau}_i(\ep,r) $ it holds that 
\begin{eqnarray}
\|\phi_i(t, x, u)\|_{X_i}  \leq \ep + \xi(\gamma(2^{1-k}r))  \leq \ep + \xi(\gamma(2\|u\|_{\Uc})). 
\label{eq:Final_nonuniform-bUAG_implication_1}
\end{eqnarray}
It remains to consider the case when $k>k_0$, i.e., when inputs are small.
The estimate \eqref{eq:nuISS_SGT_intermediate_step} gives convergence time, which depends on $k$ and it is not clear whether the supremum of
$\tilde{\tau}_i(\ep,r,k)$ over all $k\geq k_0$ exists.
To overcome this obstacle and to find the uniform time, we mimic above argument once again, namely: for any $q \in [0,r]$ one can take  
supremum of \eqref{eq:nuISS_SGT_tmp_estimate_1} over $x\in B_r$ and over all $u\in\Uc$: $\|u\|_{\Uc} \leq q$, to obtain for all $i=1,\ldots,n$ and 
all $\tau\geq \tau^*_i(\varepsilon,r) $ that 
\begin{align*} 
\sup_{\|u\|_{\Uc}\leq q}&\sup_{x\in B_r}\|\phi_i(t+\tau,x , u)\|_{X_i}
\leq \ep + \sup_{j\in I\backslash\{i\}}\gamma_{ij}\Big(\sup_{\|u\|_{\Uc} \leq q}\sup_{x\in B_r}\left\|\phi_{j,[t,+\infty)}\right\|_{\infty}\Big) + \gamma_i(q).
\end{align*} 
Defining
\[
z_i(r,q):=  \lim_{t\to +\infty}\sup_{s\geq t}\sup_{\|u\|_{\Uc} \leq q}\sup_{x\in B_r}\|\phi_i(s, x , u)\|_{X_i},
\]
and doing analogous steps as above, we obtain for any $i\in I$, $r>0$ and any $q\leq r$ that
\begin{eqnarray*}
z_i(r,q) \leq \xi(\|\vec{\gamma}(q)\|_{\ell_\infty})
  \leq \xi(\gamma(q)).
\end{eqnarray*}
This means that for any $i\in I$, $\ep>0$, any $r>0$ and any $q\geq 0$ there is a time $\bar{\tau}_i=\bar{\tau}_i(\ep,r,q)$ so that
\begin{align}
\begin{split}
\|x\|_X\leq r & \sws \|u\|_\Uc \leq q \sws t\geq\bar{\tau}_i(\ep,r,q) \\
&\srs \|\phi_i(t, x, u)\|_{X_i}  \leq \ep + \xi(\gamma(q)).
\end{split}
\label{eq:nuISS_SGT_interediate_step_Small_U}
\end{align}
In particular, for $q_0:=2^{1-k_0(\ep,r)}r$ we have  
\begin{equation} 
\label{eq:Final_nonuniformbUAG_implication_2}
\begin{split}
&\|x\|_X\leq r \sws \|u\|_\Uc \leq q_0 \sws t\geq\bar{\tau}_i(\ep,r,q_0) \\
&\ \Rightarrow\ \|\phi_i(t, x, u)\|_{X_i}  \leq \ep + \xi(\gamma(2^{1-k_0(\ep,r)}r))\leq \ep + \ep.
\end{split}
\end{equation}
%
Define 
\[
\tau_i(\ep,r) :=\max\{\hat{\tau}_i(\ep,r), \bar{\tau}_i(\ep,r,q_0) \}.
\]
Combining \eqref{eq:Final_nonuniform-bUAG_implication_1} and \eqref{eq:Final_nonuniformbUAG_implication_2}, we obtain 

\begin{align}
\|x\|_X&\leq r \sws \|u\|_\Uc \leq r \sws t\geq\tau_i(\ep,r) \nonumber\\
&\qrq \|\phi_i(t, x, u)\|_{X_i} \leq 2\ep + \xi(\gamma(2 \|u\|_\Uc)).
\label{eq:non-uniform-bUAG-est-NU-SGT}
\end{align}
where $r\mapsto \xi(\gamma(2 r))$ is a $\Kinf$-function. This shows the estimate \eqref{eq:nonuniform-in-i-UAG-bounded-inputs}.

Finally, Proposition~\ref{prop:Criterion-non-uniform-ISS} proves the claim.
\end{proof}

\begin{remark}
\label{rem:Finitely-many-systems} 
If $I$ is a finite set, then the assumptions \ref{itm:nonuniform-ISS-SGT-Ass1} and \ref{itm:nonuniform-ISS-SGT-Ass3} of 
Theorem~\ref{thm:nonuniform-ISS_SGT-infinite-interconnections} hold. Furthermore, by Proposition~\ref{prop:ISS-of-finite-networks},
non-uniform ISS is equivalent to ISS. And in view of results in \cite{MKG20}, the MBI property for $\id - \Gamma_\otimes$ is equivalent to the assertion that $\Gamma_\otimes$ satisfies the strong small-gain condition.
Hence, we recover from Theorem~\ref{thm:nonuniform-ISS_SGT-infinite-interconnections} the ISS small-gain theorem (in semimaximum formulation) for finite networks of infinite-dimensional systems, shown in \cite{Mir19b}.
\end{remark}

\begin{remark}
\label{rem:Necessity-of-uniform-bounds} 
Uniform stability margin $\tilde{\sigma}$ and a common gain for all components of the network $\gamma$ in the definition of the non-uniform stability can be obtained thanks to the assumption \ref{itm:nonuniform-ISS-SGT-Ass1} in Theorem~\ref{thm:nonuniform-ISS_SGT-infinite-interconnections}. This assumption cannot be dropped, as without it, we cannot even guarantee the existence of solutions in $\ell_\infty$ in general, even if there are no interconnections between subsystems (i.e., $\gamma_{ij}=0$ for all $i,j$).
\end{remark}

\section{Applications to stability analysis of finite networks of an unknown size}
\label{sec:Analysis of finite networks of an unknown size}

We are going to show that \emph{our non-uniform ISS small-gain theorem can be used to analyze (uniform) ISS of finite networks of an unknown size}, which are ubiquitous in many real-world applications.

In this case, we treat $(\Sigma_i)_{i\in I}$ as the family of all \emph{possible} subsystems of which the network may be constructed of, where $\Sigma_i$ is defined as in \eqref{eq:Sigma-i}.

Denote by $Q \subset I$ an index set, that is fixed but usually unknown and that enumerates all components (subsystems) which are a part of the network. We assume that the systems $(\Sigma_i)_{i\in I\backslash Q}$ are not interacting with the systems $(\Sigma_i)_{i\in Q}$, which we express by saying that the inputs from $(\Sigma_i)_{i\in I\backslash Q}$ to $\Sigma_j$ for any $j\in Q$ are zero.

Our definition of interconnection (Definition~\ref{def_interconnection}) has been introduced for the interconnection of all systems in the network, and to use this construction for the coupling of a subset of systems, we restrict the systems $(\Sigma_i)_{i\in Q}$ to smaller input spaces.

Using the definition of $X_Q$ as in \eqref{eq:X_Q}, we define the system $\Sigma_i$ with inputs restricted to systems $(\Sigma_i)_{i\in Q}$:
\begin{equation}
\label{eq:Sigma-i-restricted}
  \tilde{\Sigma}_i = (\T_+, X_i,\PC_b(\T_+,X_{Q\backslash\{i\}}) \tm \Uc,\tilde{\phi}_i),%
\end{equation}
where $\tilde{\phi}_i$ is a restriction of $\bar{\phi}_i$ to 
\[
\{(t,x_i,v)\in D_{\bar{\phi}_i}:\quad v \in \PC_b(\T_+,X_{Q\backslash\{i\}}) \tm \Uc\},
\]
where we again identify $X_{Q\backslash\{i\}}$ with the corresponding space \eqref{eq:Identification}.

Assume that the interconnection $\Sigma_Q$ of the systems $(\tilde{\Sigma}_i)_{i\in Q}$ is well-posed in the sense of Definition~\ref{def_interconnection} and that each system $\Sigma_i$, $i\in I$ is ISS in semimaximum formulation, as in Definition~\ref{def_subsys_iss_semimax}, with corresponding gains $\gamma_{ij}$ and the induced operator $\Gamma_{\otimes}$, defined by \eqref{eq:Gain-operator-semimax}.
The gain operator $\Gamma_{\otimes}$ characterizes the interconnection structure of the \emph{maximal network} $(\Sigma_i)_{i\in I}$.

Let us consider now the \emph{real network} $\Sigma_Q$ of subsystems $(\tilde{\Sigma}_i)_{i\in Q}$.
As the inputs from $(\Sigma_i)_{i\in I\backslash Q}$ to $\Sigma_j$ for any $j\in Q$ are zero, we can assume that 
$\gamma_{ij}\equiv 0$ if either $i\in I\backslash Q$, or $j\in I\backslash Q$.

Hence the subsystems  $(\tilde{\Sigma}_i)_{i\in Q}$ are also ISS in semimaximum formulation, with the gain operator
$\Gamma_{\otimes,Q}:\ell_{\infty}(Q)^+ \rightarrow \ell_{\infty}(Q)^+$ induced by the gains $(\gamma_{ij})_{i,j\in Q}$ as follows:%
\begin{equation}
\label{eq:Gain-operator-semimax-restricted}
  \Gamma_{\otimes,Q}(s) := \bigl(\sup_{j\in Q}\gamma_{ij}(s_j)\bigr)_{i\in Q},\quad s = (s_i)_{i\in Q} \in \ell_{\infty}(Q)^+.%
\end{equation}

We need the following lemma 
\begin{lemma}
\label{lem:Monotone-invertibility-of-smaller-operators} 
Assume that $\id - \Gamma_\otimes$ satisfies the MBI property. Let $B:\ell_\infty^+(I) \rightarrow \ell_\infty^+(I)$ be another operator such that $B(s) \leq \Gamma_\otimes(s)$ whenever $s\in \ell_\infty^+(I)$.
Then $\id - B$ satisfies the MBI property.
\end{lemma}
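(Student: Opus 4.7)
The proof should be a direct, one-shot transfer of the MBI constant from $\Gamma_\otimes$ to $B$ using only the pointwise domination $B(s)\le \Gamma_\otimes(s)$ and monotonicity of the componentwise order on $\ell_\infty(I)$. Since the MBI property is phrased as an implication about vector inequalities $(\id-\Gamma_\otimes)(v)\le w$, and the ordering on $\ell_\infty(I)$ is coordinate-wise, the argument is essentially just a rearrangement.

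The plan is as follows. Fix the $\xi\in\Kinf$ furnished by the MBI property of $\id-\Gamma_\otimes$. Pick arbitrary $v,w\in \ell_\infty(I)^+$ satisfying $(\id-B)(v)\le w$, which by definition means $v\le B(v)+w$ componentwise. By the hypothesis $B(v)\le \Gamma_\otimes(v)$, componentwise we then have $v \le B(v)+w \le \Gamma_\otimes(v)+w$, which rearranges to $(\id-\Gamma_\otimes)(v)\le w$. Applying the MBI property of $\id-\Gamma_\otimes$ to this inequality yields $\|v\|_{\ell_\infty(I)}\le \xi(\|w\|_{\ell_\infty(I)})$. Since $v$ and $w$ were arbitrary, the very same $\xi$ witnesses the MBI property for $\id-B$.

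There is effectively no obstacle: the only point that needs a moment of care is that the inequality $(\id-\Gamma_\otimes)(v)\le w$ in the MBI definition is allowed to involve vectors with possibly negative components on the left-hand side (since only $v,w\in \ell_\infty(I)^+$ are required, not $\Gamma_\otimes(v)\le v$), so the chain $v\le B(v)+w\le \Gamma_\otimes(v)+w$ places no sign restriction on $v-\Gamma_\otimes(v)$. Likewise, no structural property of $\Gamma_\otimes$ (such as being of the form \eqref{eq:Gain-operator-semimax} or being monotone) is used in the transfer; all that matters is the domination $B\le \Gamma_\otimes$ on $\ell_\infty(I)^+$. Hence the lemma follows in a few lines, and the proof is complete.
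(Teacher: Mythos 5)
Your proposal is correct and follows essentially the same argument as the paper: you rearrange $(\id-B)(v)\le w$ to $v\le B(v)+w\le\Gamma_\otimes(v)+w$, i.e.\ $(\id-\Gamma_\otimes)(v)\le w$, while the paper writes the equivalent observation as $(\id-B)(v)=(\id-\Gamma_\otimes)(v)+(\Gamma_\otimes-B)(v)\ge(\id-\Gamma_\otimes)(v)$; both then invoke the MBI property of $\id-\Gamma_\otimes$ with the same $\xi$.
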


\begin{proof}
As $B\leq \Gamma_\otimes$, for any $v\in \ell_\infty^+(I)$ it holds that  $(\id - B)(v) = (\id - \Gamma_\otimes)(v) + (\Gamma_\otimes-B)(v)\geq (\id - \Gamma_\otimes)(v)$.
Thus, if $(\id - B)(v) \leq w$, then $(\id - \Gamma_\otimes)(v) \leq w$, and by monotone bounded invertibility property of $\Gamma_\otimes$
it holds that $\|v\|_X \leq \xi(\|w\|_X)$, for some $\xi\in\Kinf$, independent on $w,v\in \ell_\infty^+(I)$.
\end{proof}

We have the following result:
\begin{lemma}
\label{lem:Monotone-invertibility-restricted-gain-operators} 
If $\Gamma_{\otimes}$ satisfies the monotone bounded invertibility property, then $\Gamma_{\otimes,Q}$ satisfies the monotone bounded invertibility property for any $Q \subset I$.
\end{lemma}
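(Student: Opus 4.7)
The plan is to reduce the MBI property for $\Gamma_{\otimes,Q}$ to the MBI property for $\Gamma_\otimes$ by extending vectors from $\ell_\infty(Q)$ to $\ell_\infty(I)$ by zero outside $Q$, and then exploit that zero is a fixed point in each gain (either $\gamma_{ij}\in\K$, in which case $\gamma_{ij}(0)=0$, or $\gamma_{ij}\equiv 0$).

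More concretely, fix $v,w \in \ell_\infty(Q)^+$ with $(\id - \Gamma_{\otimes,Q})(v) \leq w$, and define $\tilde v,\tilde w \in \ell_\infty(I)^+$ by
\begin{equation*}
\tilde v_j := \begin{cases} v_j, & j\in Q,\\ 0, & j \in I\setminus Q,\end{cases}\qquad
\tilde w_j := \begin{cases} w_j, & j\in Q,\\ 0, & j \in I\setminus Q.\end{cases}
\end{equation*}
The first step is to compute $\Gamma_\otimes(\tilde v)_i = \sup_{j\in I}\gamma_{ij}(\tilde v_j) = \sup_{j\in Q}\gamma_{ij}(v_j)$, since $\gamma_{ij}(0)=0$ for every $j\notin Q$. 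Thus for $i\in Q$ we have $\Gamma_\otimes(\tilde v)_i = \Gamma_{\otimes,Q}(v)_i$, and consequently
\begin{equation*}
\bigl((\id - \Gamma_\otimes)(\tilde v)\bigr)_i = v_i - \Gamma_{\otimes,Q}(v)_i \leq w_i = \tilde w_i,
\end{equation*}
while for $i\in I\setminus Q$ we have $\bigl((\id - \Gamma_\otimes)(\tilde v)\bigr)_i = -\sup_{j\in Q}\gamma_{ij}(v_j) \leq 0 = \tilde w_i$. Hence $(\id - \Gamma_\otimes)(\tilde v) \leq \tilde w$ in $\ell_\infty(I)$.

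The second step is to invoke the MBI property of $\Gamma_\otimes$ to obtain $\xi\in\Kinf$ (the \emph{same} $\xi$ that witnesses MBI for $\Gamma_\otimes$, independent of $Q$) with $\|\tilde v\|_{\ell_\infty(I)} \leq \xi(\|\tilde w\|_{\ell_\infty(I)})$. The zero-extension preserves sup-norms, so $\|\tilde v\|_{\ell_\infty(I)} = \|v\|_{\ell_\infty(Q)}$ and $\|\tilde w\|_{\ell_\infty(I)} = \|w\|_{\ell_\infty(Q)}$, giving $\|v\|_{\ell_\infty(Q)} \leq \xi(\|w\|_{\ell_\infty(Q)})$, which is exactly the MBI property for $\Gamma_{\otimes,Q}$.

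There is no real obstacle here; the only point to be careful about is the behavior of $\Gamma_\otimes$ on the zero-extended vector, which works out cleanly because $\gamma_{ij}(0)=0$ for all indices, so that the semimaximum formulation is \emph{compatible} with restriction to sub-index-sets. (As an alternative, one could appeal to Lemma \ref{lem:Monotone-invertibility-of-smaller-operators} by defining $B: \ell_\infty^+(I)\to\ell_\infty^+(I)$ as $B(s)_i = \sup_{j\in Q}\gamma_{ij}(s_j)$ if $i\in Q$ and $B(s)_i = 0$ if $i\notin Q$, which satisfies $B(s) \leq \Gamma_\otimes(s)$; but the direct zero-extension argument above is more transparent.)
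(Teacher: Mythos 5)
Your proof is correct. It follows the same overall strategy as the paper — zero-extending vectors from $\ell_\infty(Q)^+$ to $\ell_\infty(I)^+$ and reducing to the MBI property of the full gain operator — but it is somewhat more direct than the paper's argument. The paper constructs an intermediate truncated operator $\Gamma_{\otimes,I}$ (with gains $\tilde\gamma_{ij}$ set to zero whenever $i$ or $j$ falls outside $Q$), establishes MBI for $\Gamma_{\otimes,I}$ via Lemma~\ref{lem:Monotone-invertibility-of-smaller-operators}, and only then transfers the inequality $(\id-\Gamma_{\otimes,Q})\tilde w \leq \tilde v$ to an equivalent $I$-level inequality for $\Gamma_{\otimes,I}$. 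You skip the intermediate operator entirely: since every $\gamma_{ij} \in \K \cup \{0\}$ satisfies $\gamma_{ij}(0)=0$, the zero extension makes $\Gamma_\otimes(\tilde v)_i = \Gamma_{\otimes,Q}(v)_i$ for $i \in Q$, and for $i \notin Q$ the component of $(\id - \Gamma_\otimes)(\tilde v)$ is simply $-\sup_{j\in Q}\gamma_{ij}(v_j) \leq 0$; since MBI only requires the one-sided implication, the nonpositivity there suffices (the paper, by contrast, makes that component exactly zero so as to get an equivalence rather than an implication). Your route buys a slightly shorter proof that does not invoke the auxiliary domination lemma, while the paper's route has the mild advantage of exhibiting $\Gamma_{\otimes,Q}$ explicitly as a restriction of an operator dominated by $\Gamma_\otimes$ — and you already noted this alternative yourself. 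Both are sound.
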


\begin{proof}
Let $\tilde{w},\tilde{v}\in \ell_{\infty}(Q)^+$ be such that 
\begin{eqnarray}
(\id - \Gamma_{\otimes,Q})\tilde{w} \leq \tilde{v}.
\label{eq:Premise-MBI-Gamma-Q}
\end{eqnarray}
Define $w = (w_i)_{i\in I}$ and $v = (v_i)_{i\in I} \in \ell_{\infty}(I)^+ $ by 
$w_i = \tilde{w}_i$, $v_i = \tilde{v}_i$, $i\in Q$ and $v_i = w_i = 0$, $i\in I\backslash Q$.

Also define $\Gamma_{\otimes,I}:  \ell_{\infty}(I)^+  \to  \ell_{\infty}(I)^+$ by 
\begin{equation}
\label{eq:Gain-operator-semimax-extending-restriction}
  \Gamma_{\otimes,I}(s) := \bigl(\sup_{j\in I}\tilde{\gamma}_{ij}(s_j)\bigr)_{i\in I},\quad s = (s_i)_{i\in I} \in \ell_{\infty}(I)^+,%
\end{equation}
where $\tilde{\gamma}_{ij}\equiv 0$ if either $i\in I\backslash Q$ or $j\in I\backslash Q$, and $\tilde{\gamma}_{ij} = \gamma_{ij}$ otherwise.

As $\Gamma_{\otimes,I}\leq \Gamma_{\otimes}$, $\Gamma_{\otimes,I}$ has MBI property by Lemma~\ref{lem:Monotone-invertibility-of-smaller-operators}. 

Furthermore, \eqref{eq:Premise-MBI-Gamma-Q} holds if and only if $(\id - \Gamma_{\otimes,I})w \leq v$, and by MBI property of $\Gamma_{\otimes,I}$, it holds that 
$\|w\|_{\ell_\infty(I)} \leq \xi(\|v\|_{\ell_\infty(I)})$. As $\|w\|_{\ell_\infty(I)} = \|\tilde{w}\|_{\ell_\infty(Q)}$ and $\|v\|_{\ell_\infty(I)} = \|\tilde{v}\|_{\ell_\infty(Q)}$,
we obtain that $\|\tilde{w}\|_{\ell_\infty(Q)} \leq \xi(\|\tilde{v}\|_{\ell_\infty(Q)})$ holds, with $\xi$ independent of $\tilde{w}, \tilde{v}$. This shows the claim.
\end{proof}

The following result tells that the non-uniform ISS small-gain theorem can be effectively used for ISS analysis of finite networks of unknown size, even if there is no a priori uniform $\KL$-bound on the transient behavior of subsystems from which the network consists of.


\begin{theorem}[Non-uniform ISS small-gain theorem for subnetworks]
\label{thm:Subnetworks-stability-NU-SGT-based} 
Let $\Sigma_i:=(\T_+,X_i,\PC_b(\T_+,X_{\neq i}) \tm \Uc,\bar{\phi}_i)$, $i\in I$ be forward complete control systems, satisfying the ISS estimates as in Definition~\ref{def_subsys_iss_semimax}. 
Furthermore, let the conditions \ref{itm:nonuniform-ISS-SGT-Ass1}--\ref{itm:nonuniform-ISS-SGT-Ass3} of Theorem~\ref{thm:nonuniform-ISS_SGT-infinite-interconnections} hold.

Then for any subset $Q \subset I$ such that  $\Sigma_Q=(\T_+,X_Q,\Uc,\phi)$ (as defined in this section) is well-posed, $\Sigma_Q$ is non-uniformly ISS.
If $Q$ is a finite set, then $\Sigma_Q$ is ISS.
\end{theorem}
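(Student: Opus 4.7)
The plan is to derive the result as a direct corollary of Theorem~\ref{thm:nonuniform-ISS_SGT-infinite-interconnections} applied to the \emph{restricted} network $\Sigma_Q$, which requires us to verify that the three hypotheses of that theorem survive the passage from $I$ to $Q$.

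First, I would check that each restricted subsystem $\tilde{\Sigma}_i$, $i \in Q$, is still ISS in semimaximum formulation with the same transient bound $\beta_i$ and external gain $\gamma_i$, but with internal gains $(\gamma_{ij})_{j \in Q\setminus\{i\}}$. This is immediate: since the convention is that inputs from $(\Sigma_j)_{j \in I \setminus Q}$ to $\Sigma_i$ are zero, the supremum $\sup_{j \in I}\gamma_{ij}(\|w_j\|_{[0,t]})$ in the ISS estimate of $\Sigma_i$ collapses to $\sup_{j \in Q\setminus\{i\}}\gamma_{ij}(\|w_j\|_{[0,t]})$, giving precisely the ISS estimate demanded of $\tilde{\Sigma}_i$.

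Next, I would verify the three conditions of Theorem~\ref{thm:nonuniform-ISS_SGT-infinite-interconnections} for $(\tilde{\Sigma}_i)_{i\in Q}$. Condition~\ref{itm:nonuniform-ISS-SGT-Ass1} (common $\sigma\in\K$ and $\gamma\in\K$ dominating all $\beta_i(\cdot,t)$ and $\gamma_i$) is inherited from the hypothesis for $I$, as is Assumption~\ref{ass_gammamax_welldef} (since a supremum over $Q^2 \subset I^2$ is bounded by the supremum over $I^2$). Condition~\ref{itm:nonuniform-ISS-SGT-Ass3} (each node has only finitely many nonzero incoming gains) is also inherited, because restricting to $Q$ can only turn more gains to zero. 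The MBI part of condition~\ref{itm:nonuniform-ISS-SGT-Ass2} is exactly the content of Lemma~\ref{lem:Monotone-invertibility-restricted-gain-operators}: from the MBI property of $\id - \Gamma_\otimes$ on $\ell_\infty(I)^+$ we obtain the MBI property of $\id - \Gamma_{\otimes,Q}$ on $\ell_\infty(Q)^+$ with the \emph{same} $\xi\in\Kinf$. Together with well-posedness of $\Sigma_Q$, which is assumed, Theorem~\ref{thm:nonuniform-ISS_SGT-infinite-interconnections} now yields that $\Sigma_Q$ is non-uniformly ISS.

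Finally, if $Q$ is finite, I invoke Proposition~\ref{prop:ISS-of-finite-networks}: over a finite index set, non-uniform ISS and ISS coincide (one merely takes $\beta(r,t) := \max_{i\in Q}\tilde{\beta}_i(r,t) \in \KL$). Hence $\Sigma_Q$ is ISS. I do not expect any serious obstacle here; the entire argument is a bookkeeping reduction, and the only nontrivial ingredient, the preservation of MBI under restriction of the index set, has already been established in Lemma~\ref{lem:Monotone-invertibility-restricted-gain-operators} via the comparison Lemma~\ref{lem:Monotone-invertibility-of-smaller-operators}.
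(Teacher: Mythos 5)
Your proof is correct and takes essentially the same route as the paper: reduce to Theorem~\ref{thm:nonuniform-ISS_SGT-infinite-interconnections} applied to the restricted network, use Lemma~\ref{lem:Monotone-invertibility-restricted-gain-operators} for the MBI condition, and invoke Proposition~\ref{prop:ISS-of-finite-networks} for the finite case. The paper's proof is a terse two-sentence citation of these ingredients, whereas you explicitly verify that all hypotheses of the main small-gain theorem persist under restriction to $Q$, which is the implicit bookkeeping the paper leaves to the reader.
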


\begin{proof}
Non-uniform ISS of $\Sigma_Q$ follows from Theorem~\ref{thm:nonuniform-ISS_SGT-infinite-interconnections} and Proposition~\ref{lem:Monotone-invertibility-restricted-gain-operators}.
If $Q$ is a finite set, then ISS of $\Sigma_Q$ follows by Proposition~\ref{prop:ISS-of-finite-networks}.
\end{proof}

Assumption \ref{itm:nonuniform-ISS-SGT-Ass3} of Theorem~\ref{thm:nonuniform-ISS_SGT-infinite-interconnections} needs to be satisfied in 
Theorem~\ref{thm:Subnetworks-stability-NU-SGT-based} only in case if $Q$ is of infinite cardinality. For finite set $Q$ this assumption is not needed, as for the subnetwork $Q$ it will be always fulfilled.


\begin{remark}
\label{rem:Uniform-SGT-for-subnetworks} 
Theorem~\ref{thm:Subnetworks-stability-NU-SGT-based} gives a condition for ISS of any finite subnetwork, but the functions $\beta$ and $\gamma$ in the ISS definition may depend on $Q$, i.e., we do not have uniform $\beta$ and $\gamma$ for all finite $Q \subset I$.
To state the ISS Small-gain theorem for subnetworks, which guarantees such a uniformity, we need to require stronger conditions on the gain operator, such as a monotone limit property of $\Gamma_\otimes$, see \cite[Theorem 2]{MKG20}. 
\end{remark}

\section{Funding}

This work was supported by the German Research Foundation (DFG) [MI 1886/2-1 to A.M.].

\bibliography{C:/Users/Andrii/Dropbox/TEX_Data/Mir_LitList_NoMir,C:/Users/Andrii/Dropbox/TEX_Data/MyPublications}

\end{document}